\UseRawInputEncoding

\documentclass[12pt,draft]{article}
\usepackage{amsfonts}
\usepackage{amsmath}
\usepackage{amsthm}
\usepackage{amssymb}
\usepackage{mathrsfs}
\usepackage{newtxtext}

\newtheorem{dfz}{Definition}
\newtheorem{lem}{Lemma}
\newtheorem{teor}{Theorem}

\newtheorem{oss}{Remark}
\newcommand{\Ctond}{\mathaccent"017{C}}

\newcommand{\eps}{\varepsilon}
\newcommand{\vf}{\varphi}
\newcommand{\abs}[1]{\vert #1 \rvert}
\newcommand{\norma}[1]{\Vert #1 \Vert}

\title{Completeness theorems on the boundary for a parabolic equation}

\author{A.\ Cialdea
\thanks{Dipartimento di Scienze di Base e Applicate,
University of Ba\-si\-li\-ca\-ta, V.le dell'Ateneo Lucano, 10, 85100 Potenza, Italy.
 \textit{email:}
alberto.cialdea@unibas.it.}\and
C.S.\ Mare
\thanks{Dipartimento di Scienze di Base e Applicate,
University of Ba\-si\-li\-ca\-ta, V.le dell'Ateneo Lucano, 10, 85100 Potenza, Italy.
\textit{email:}
carmine.mare@unibas.it.}
}
\date{}    

\begin{document}

\maketitle

\begin{abstract}
Let $\{v_{\alpha}\}$ be a system of polynomial solutions of the parabolic equation $a_{hk}\partial_{x_{h}x_{k}}u - \partial_t u =0$ in a bounded $C^1$-cylinder $\Omega_{T}$ contained in $\mathbb{R}^{n+1}$. Here $a_{hk}\partial_{x_{h}x_{k}}$ is an elliptic operator with real constant coefficients. We prove that $\{v_{\alpha}\}$ is complete in $L^{p}(\Sigma')$, where $\Sigma'$ is the parabolic boundary of $\Omega_{T}$. Similar results are proved for the adjoint equation
$a_{hk}\partial_{x_{h}x_{k}} u+ \partial_t u =0$.
\end{abstract}

\textbf{Keywords:} Completeness theorems, Parabolic equations, Dirichlet problem, Partial differential equations with constant coefficients.

\textbf{MSC:} 42C30, 35K20, 35A35.

\section{Introduction}
The problem of the completeness of certain systems of  solutions - in particular
polynomial solutions - of a PDE on the
boundary has a long history. The prototype of these results is the completeness of harmonic polynomials on the boundary of a bounded domain. The  very first results proved in this field are due to \textsc{Fichera} 
\cite{Fichera1948}, who proved completeness theorems for harmonic polynomials related to
Dirichlet, Neumann, Robin and the mixed BVP for Laplace equation.  
For a full description of the problem and a review of several known results, we refer to Section 2 of \cite{C2019}. 
That paper contains a fairly complete list of references, to which we add
the more recent  \cite{CElasticNew} and \cite{CLanzCompl24}.

 Very few results have been obtained for parabolic equations in this field. Completeness theorems for the heat equation in a $C^2$-cylinder were proved by \textsc{Magenes} in his pioneeristic papers
\cite{MagenesNota1,MagenesNota2} (see also \textsc{Pagni} \cite{Pagni}).

Here we consider the more general parabolic operators
\begin{equation}
\label{eq:operatorecalore}
H=E-\frac{\partial}{\partial t}
\end{equation}
and its adjoint
\begin{equation}
\label{eq:operatorecalore*}
H^{*}=E+\frac{\partial}{\partial t}\, ,
\end{equation}
where $E$ is a second order elliptic operator with real constant coefficients and no lower order terms.

The aim of the present paper is to prove completeness theorems
related to the Dirichlet problem for the equation $Hu=0$ ($H^{*}u=0$)
in a $C^1$-cylinder $\Omega_{T}=\Omega\times [0,T]$.  More precisely, we shall prove that
the system of parabolic polynomials - i.e.\  polynomials satisfying the equation $Hu=0$ ($H^{*}u=0$)
- is complete in $L^{p}(\Sigma')$ ($1\leq p<\infty$), where
$\Sigma'$ is the parabolic boundary of $\Omega_{T}$ related to the operator $H$ ($H^{*}$).

The completeness results hinge on the study of certain classes of solutions,
which we denote by $\mathscr{A}^{p}(\Omega_{T})$ and  $\mathscr{A}^{p}_{*}(\Omega_{T})$. 
A key ingredient of this study is potential theory on $C^1$-cylinder. The relevant
fundamental results have been obtained by \textsc{Fabes} and \textsc{Rivi\`ere} 
\cite{FabesRivHeat}. They considered potentials for the heat equation, but their results
can be extended to the more general parabolic potentials  here considered. 

Another important ingredient is an approximation result due to \textsc{Malgrange} \cite{malgrange}.
This will enable us to deduce the completeness property of parabolic polynomials from a completeness property we obtain for certain potentials.

The paper is organized as follows. After some preliminaries given in Section \ref{sec:preliminary},
several properties of parabolic potentials are proved in Section \ref{sec:jump}.

Section \ref{sec:AeA*} is devoted to the study of the classes $\mathscr{A}^{p}(\Omega_{T})$ and  $\mathscr{A}^{p}_{*}(\Omega_{T})$. In particular some uniqueness theorems are obtained.

In Section \ref{sec:polynom} we construct a system of parabolic polynomials $\{v_{\alpha}\}$ $\left(\{w_{\alpha}\}\right)$,  such that
any polynomial satisfying the equation $Hu=0$ ($H^{*}u=0$) can be written as a finite linear combination of 
$\{v_{\alpha}\}$ $\left(\{w_{\alpha}\}\right)$.

Completeness theorems are then proved in Section \ref{sec:compl}.

\section{Preliminaries}\label{sec:preliminary}

Let $\Omega$ be a bounded domain of $\mathbb{R}^{n}$ ($n\geq 3$).
We assume always that $\mathbb{R}^{n}\setminus\overline{\Omega}$ is connected and
that $\partial\Omega$ is a $C^1$ boundary.

Given a positive $T$, we consider the $C^1$-cylinder $\Omega_{T}=\Omega\times (0,\,T)$ in $\mathbb{R}^{n+1}$. 
Points in $\mathbb{R}^{n+1}$ will be denoted by $(x,t)$, where $x\in\mathbb{R}^{n}$
and $t\in \mathbb{R}$, and
$\langle \cdot,\cdot\rangle$ stands for the inner product in $\mathbb{R}^{n}$.

Let $H$ be the parabolic operator \eqref{eq:operatorecalore}  and $H^{*}$ be its adjoint
\eqref{eq:operatorecalore*}, where
$$ 
Eu=\sum_{h,k=1}^{n} a_{hk}\frac{\partial^{2}u}{\partial x_h\,\partial x_k}\, ,
$$
$A=\{a_{hk}\}$ being a real symmetric and positive definite matrix.  $\abs{A}$ denotes the determinant of $A$ and by $A^{-1}$ we mean
the inverse matrix of $A$.

If $u,v\in C^{2}(\overline{\Omega}_T)$ the following Green's formula for the operator $H$ holds
\begin{equation}\label{eq:Green}
\int_{\Omega_T}[vH(u)-uH^{*}(v)]dx\,dt=-\int_{\Sigma_3}\biggl(v\frac{\partial u}{\partial\overline{\nu}}-u\frac{\partial v}{\partial\overline{\nu}}\biggr)d\sigma - \int_{\Sigma_1}uv\,d\sigma + \int_{\Sigma_2}uv\,d\sigma
\end{equation}
where 
\begin{equation*}
\Sigma_1=\{(x,T)\,\,|\,\,x\in\Omega\}\text{,}\,\,\Sigma_2=\{(x,0)\,\,|\,\,x\in\Omega\}\text{,}\,\,\Sigma_3=\partial\Omega\times (0,T),
\end{equation*}
 $\nu$ is the interior normal vector to $\Sigma=\partial\Omega_T$
and $\overline{\nu}$  is the conormal vector  defined by 
$(A\nu,0)$
(see, e.g., \cite[p.20]{OleinikRad}).

The fundamental solution for the parabolic equation $Hu=0$ is given by
$$
G(x,t)=
\begin{cases}
\frac{1}{(4\pi t)^{\frac{n}{2}}\abs{A}^{\frac{1}{2}}}\exp[-\frac{\langle A^{-1}x,x\rangle}{4t}], & \text{if}\ t>0,\\
0 ,  &\text{if}\ t\leq 0.\\
\end{cases}
$$

More precisely, $G(x-y,t-s)$ as a function of $(x,t)$ is of class $C^{\infty}(\mathbb{R}^{n+1}\setminus\lbrace(y,s)\rbrace)$ and is a fundamental solution for the operator $-H$ (see, e.g., \cite[p.146]{FollandPDEs}, where the result is given for the heat equation, but it can be easily adapted to $-H$), while as a function of $(y,s)$ is of class $C^{\infty}(\mathbb{R}^{n+1}\setminus\lbrace(x,t)\rbrace)$
and is a fundamental solution for the operator $-H^{*}$.

We observe that, by using the fundamental solution $G$, we have    a
``Stokes representation formula'' for  smooth enough functions:
\begin{equation}\label{eq:StokesH}
\begin{gathered}
-\int_{\Omega_{T}} G(x-y,t-s) \, Hu(y,s)\, dyds
\\+
\int_{\Sigma_3}\left(u(y,s)\, \frac{\partial }{\partial\overline{\nu}_{y,s}}G(x-y,t-s) - \frac{\partial u(y,s)}{\partial\overline{\nu}_{y,s}} \, G(x-y,t-s)\right)d\sigma_{y,s} \\
-\int_{\Sigma_1}u(y,T)\, G(x-y,t-T)\,dy +\int_{\Sigma_2}u(y,0)\, G(x-y,t)\, dy 
\\=
\begin{cases}
u(x,t)  & \text{if }  (x,t)\in\Omega_{T}, \\ 
0 & \text{if } (x,t) \notin \overline{\Omega}_{T}\, .
\end{cases}
\end{gathered}
\end{equation}
Note that, if $t<T$, in particular when $(x,t)\in\Omega_{T}$,
$$
\int_{\Sigma_1}u(y,T)\, G(x-y,t-T)\,dy=0.
$$

We have also
\begin{equation}\label{eq:StokesH*}
\begin{gathered}
-\int_{\Omega_{T}} G(x-y,t-s) \, H^{*}u(x,t)\, dxdt
\\
+\int_{\Sigma_3}\left(u(x,t)\, \frac{\partial }{\partial\overline{\nu}_{x,t}}G(x-y,t-s) - \frac{\partial u(x,t)}{\partial\overline{\nu}_{x,t}}\, G(x-y,t-s)\right)d\sigma_{x,t} \\
+\int_{\Sigma_1}u(x,T)\, G(x-y,T-s)\, dx -\int_{\Sigma_2}u(x,0)\, G(x-y,-s)\, dx 
\\=
\begin{cases}
u(y,s)  & \text{if }  (y,s)\in\Omega_{T}, \\ 
0 & \text{if } (y,s) \notin \overline{\Omega}_{T},
\end{cases}
\end{gathered}
\end{equation}
and
$$
\int_{\Sigma_2}u(x,0)\, G(x-y,-s)\, dx=0
$$
if $s>0$, in particular when $(y,s)\in\Omega_{T}$.

As a consequence of these representation formulas,  we have that, if 
$A$ is a domain in $\mathbb{R}^{n+1}$, then
\begin{equation}\label{eq:suppcomp}
\begin{gathered}
v(x,t)=-\int_{A}G(x-y,t-s) \, Hv(y,s)\, dyds, \quad \forall\ (x,t)\in \mathbb{R}^{n+1},\\
v(y,s)=-\int_{A}G(x-y,t-s) \, H^{*}v(x,t)\, dxdt, \quad \forall\ (y,s)\in \mathbb{R}^{n+1}
\end{gathered}
\end{equation}
for any $v\in \Ctond^{\infty}(A)$,  where $\Ctond^{\infty}(A)$ is the space of
$C^{\infty}$ functions with compact support contained in $A$.

As usual, we denote by $L^{p}(\Sigma)$ ($1\leq p<\infty$) 
 the vector space of all measurable real valued functions $f$
such that $\abs{f}^{p}$ is integrable over $\Sigma$.

\section{The jump formulas for  parabolic potentials}\label{sec:jump}

Let $\vf\in L^{p}(\Sigma_3)$ ($1\leq p<\infty$). The   double layer parabolic potential with density $\vf$ is defined as
\begin{equation*}
\int_{\Sigma_3}\vf(y,s)\,\frac{\partial G(x-y,t-s)}{\partial\overline{\nu}_{y,s}}\,d\sigma_{y,s}\, .
\end{equation*}

The next results show that the jump relations holding on the lateral boundary $\Sigma_{3}$ for caloric potentials also hold for this more general type of potentials.

\begin{teor}
\label{teor:salto_doppio}
Let $\vf\in L^{p}(\Sigma_3)$ ($1\leq p<\infty$). Then 
\begin{equation}
\label{eq:formulasalto_doppio}
\begin{gathered}
\lim_{(x,t)\to (x_{0},t)^{\pm}}\int_{\Sigma_3}\vf(y,s)\,\frac{\partial G(x-y,t-s)}{\partial\overline{\nu}_{y,s}}\,d\sigma_{y,s} =\pm\frac{1}{2}\vf(x_0,t)+\\
                                                         +\int_{\Sigma_3}\vf(y,s)\,\frac{\partial G(x_0-y,t-s)}{\partial\overline{\nu}_{y,s}}\,d\sigma_{y,s}
\end{gathered}
\end{equation} 
for almost every $(x_0,t) \in \Sigma_3$. Here the limit $(x,t)\to (x_0,t)^{+}$ ($(x,t)\to (x_0,t)^{-}$) has to be understood as 
an internal (external) angular boundary value and the integral on the right hand side exists as a singular integral. Moreover, for any $1<p<\infty$ and for any real $c\neq 0$, the operator
\begin{equation}\label{eq:coperator}
c\, \vf(x,t) + \int_{\Sigma_3}\vf(y,s)\,\frac{\partial G(x-y,t-s)}{\partial\overline{\nu}_{y,s}}\,d\sigma_{y,s}
\end{equation}
is invertible on $L^{p}(\Sigma_{3})$.
\end{teor}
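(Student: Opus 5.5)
The natural approach is to reduce everything to the heat-equation case treated by Fabes and Rivi\`ere \cite{FabesRivHeat} through a linear change of the space variables. Let $A^{1/2}$ be the positive square root of $A$, and set $x=A^{1/2}\xi$, $y=A^{1/2}\eta$. Then $\langle A^{-1}(x-y),x-y\rangle=\abs{\xi-\eta}^2$, so that
$$G(x-y,t-s)=\abs{A}^{-1/2}\,\Gamma(\xi-\eta,t-s),$$
where $\Gamma$ is the standard heat kernel. The operator $E$ becomes the Laplacian in $\xi$. The cylinder $\Omega_T$ is carried to $\widetilde\Omega_T=\widetilde\Omega\times(0,T)$ with $\widetilde\Omega=A^{-1/2}\Omega$. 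Since the map is linear, $\widetilde\Omega$ is still a bounded $C^1$ domain whose complement is connected.

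The first step is to track the conormal derivative and the surface measure. If $\nu$ is the unit interior normal to $\partial\Omega$ at $y$, then the normal to $\partial\widetilde\Omega$ at $\eta$ is $\tilde\nu=A^{1/2}\nu/\abs{A^{1/2}\nu}$, and the chain rule gives
$$\langle A\nu,\nabla_y\rangle=\langle A^{1/2}\nu,\nabla_\eta\rangle=\abs{A^{1/2}\nu}\,\partial_{\tilde\nu}.$$
For the surface elements, $d\sigma_y=\abs{A}^{1/2}\abs{A^{-1/2}}\dots$; more precisely,
$$d\sigma_y=\abs{A}^{1/2}\,\abs{A^{1/2}\nu}^{-1}\,d\sigma_\eta.$$
To confirm this, I will use the divergence theorem identity $\int_{\partial\Omega}\langle F,\nu\rangle d\sigma=\int_\Omega \mathrm{div}F$ and change variables on both sides. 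Multiplying out, the factors cancel, and the double layer potential becomes exactly the caloric double layer potential $\widetilde W\tilde\vf(\xi,t)$ on $\widetilde\Sigma_3$, with density $\tilde\vf(\eta,s)=\vf(A^{1/2}\eta,s)$. The map $\vf\mapsto\tilde\vf$ is an isomorphism of $L^p(\Sigma_3)$ onto $L^p(\widetilde\Sigma_3)$, because the Jacobian of the surface map is bounded above and below.

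The second step is to transport the results. Nontangential (angular) cones and null sets are preserved by a bi-Lipschitz linear map, although the aperture of the cones may change. This is harmless, since the Fabes--Rivi\`ere results hold for any aperture. Their jump relation for caloric double layers on $C^1$ cylinders therefore gives \eqref{eq:formulasalto_doppio} almost everywhere, with the singular integral existing as a principal value. The principal value truncations in the two sets of variables are not the same, so I must check that the difference between truncation in $\abs{x-y}$ and in $\abs{\xi-\eta}$ tends to zero. I plan to handle this using the maximal-operator bounds and density of smooth functions, or else by defining the singular integral directly through the pulled-back truncations.

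For the invertibility statement, Fabes--Rivi\`ere prove that $\pm\frac12 I+K$ is invertible on $L^p$ for $1<p<\infty$ on $C^1$ cylinders, where $K$ is the principal-value caloric double layer. For general $c\neq0$ I would use their key structural fact: on $C^1$ cylinders the operator $K$ splits as a sum of an operator of small norm and a compact operator, and in fact the parabolic structure (time-Volterra character) makes $K$ \emph{quasi-nilpotent} on $L^p(\widetilde\Sigma_3)$. To obtain this, I split $K=K_\delta+R_\delta$, where $K_\delta$ involves only times with $t-s<\delta$ and has norm $o(1)$ as $\delta\to0$ because the boundary is $C^1$, and $R_\delta$ is an operator of Volterra type. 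Iterating then shows that the spectral radius is $0$, so $cI+K$ is invertible for every $c\neq0$.

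I expect the main obstacle to be this last claim, namely the smallness of the short-time part on $C^1$ rather than $C^{1+\epsilon}$ boundaries. If quasi-nilpotence cannot be extracted cleanly from \cite{FabesRivHeat}, I would instead argue via Fredholm theory. The map $c\mapsto cI+K$ is Fredholm of index $0$ on $\{c\neq0\}$, using the decomposition into a small-norm part and a compact part localized in time. Injectivity would then follow from uniqueness for the Dirichlet problem, using the time-Volterra structure to iterate over short time intervals.
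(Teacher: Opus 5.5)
Your route is correct but genuinely different from the paper's. The paper never transforms the space variables. It computes the double layer potential of the density $1$ on $\partial\Omega\times(-\infty,+\infty)$ and, through the substitution $u=\langle A^{-1}(x-y),x-y\rangle/4(t-s)$, identifies it with the elliptic Gauss integral $-\int_{\partial\Omega}\partial_{\overline{\nu}_y}s(x,y)\,d\sigma_y$. It then proves the jump relation by hand for $\vf\in\Ctond^{1}(\Sigma_3)$, subtracting $\vf(x_0,t)$ and using dominated convergence. It passes to $L^p$ densities through a nontangential maximal function bound and a density argument, both obtained ``reasoning as in'' Fabes--Rivi\`ere and Fabes--Jodeit--Rivi\`ere, and invertibility is simply referred to Fabes--Rivi\`ere [Th.~1.3].

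Your substitution $x=A^{1/2}\xi$ instead conjugates the operator, via the $L^p$-isomorphism $\vf\mapsto\vf(A^{1/2}\cdot,\cdot)$, to the standard caloric double layer on $A^{-1/2}\Omega\times(0,T)$. Your formulas for the conormal derivative and for $d\sigma_y$, and the cancellation of the factors $\abs{A}^{\pm1/2}$, are correct. As a result, the heat-equation results can be imported verbatim rather than re-derived for an anisotropic kernel.

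The cost is exactly the bookkeeping you flagged (cone apertures, matching the principal-value truncations), which the paper never meets because it stays in the original variables. In return, the paper's hands-on treatment of smooth densities gives an explicit link with elliptic potentials, which it reuses in Theorems~\ref{teor:salto_conorm} and~\ref{teor:salto1}. For general $c\neq0$, both arguments rest on the same $C^1$ estimates of Fabes--Rivi\`ere, namely the smallness of the part of the operator with $0<t-s<\delta$. Your time-stepping (block lower-triangular) argument is the right way to turn this into invertibility for every $c\neq 0$.

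Two minor points. First, the imported $L^p$ theory covers $1<p<\infty$, so for $p=1$ you still need the weak-$(1,1)$ bound for the maximal function, as the paper does; it transports under your change of variables just as well. Second, in your fallback, injectivity of $cI+K$ for $c\neq\pm\frac12$ does not come from uniqueness for a boundary value problem but directly from causality. If $\vf=0$ for $t<t_0$ and the short-time part has norm less than $\abs{c}$ on slabs of length $h$, the equation forces $\vf=0$ on $(t_0,t_0+h)$.
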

\begin{proof}
We begin by considering the potential
$$
\int_{\Sigma_{3}^{\infty}}\,\frac{\partial G(x-y,t-s)}{\partial\overline{\nu}_{y,s}}\,d\sigma_{y,s} \, ,
$$
where  $\Sigma_{3}^{\infty}=\partial\Omega\times(-\infty,+\infty)$.  This is equal to
\begin{equation}\label{eq:equalto}
\frac{1}{2(4\pi)^{n/2}|A|^{1/2}} \int_{-\infty}^{t} \frac{ds}{(t-s)^{1+n/2}}
 \int_{\partial\Omega}
\langle x-y,\nu(y)\rangle   e^{-\frac{\langle A^{-1}(x-y),(x-y)\rangle}{4(t-s)}}\, d\sigma_{y}\, .
\end{equation}

Setting $u=\langle A^{-1}(x-y),(x-y)\rangle/(4(t-s))$, we find
\begin{gather*}
\frac{1}{2 \pi^{n/2}|A|^{1/2}}
\int_{0}^{+\infty}u^{\frac{n}{2}-1}e^{-u}du \int_{\partial\Omega}
\frac{\langle x-y,\nu(y)\rangle}{\langle A^{-1}(x-y),(x-y)\rangle^{n/2}} d\sigma_{y}
\\
=\frac{\Gamma(n/2)}{2 \pi^{n/2}|A|^{1/2}}
\int_{\partial\Omega}
\frac{\langle x-y,\nu(y)\rangle}{\langle A^{-1}(x-y),(x-y)\rangle^{n/2}} d\sigma_{y}\\
= \frac{1}{\omega_n |A|^{1/2}}\int_{\partial\Omega}
\frac{\langle x-y,\nu(y)\rangle}{\langle A^{-1}(x-y),(x-y)\rangle^{n/2}} d\sigma_{y}. 
\end{gather*}
This shows that, denoting by $s(x,y)$ the fundamental solution of the elliptic equation $Eu=0$, i.e.
$$
s(x,y)=\frac{1}{(2-n)\omega_{n}|A|^{1/2}}\langle A^{-1}(x-y),(x-y)\rangle^{(2-n)/2},
$$
we have
$$
\int_{\Sigma_{3}^{\infty}}\,\frac{\partial G(x-y,t-s)}{\partial\overline{\nu}_{y,s}}\,d\sigma_{y,s} =
- \int_{\partial\Omega} \frac{\partial}{\partial\overline{\nu}_{y}} s(x,y)\, d\sigma_y
$$
and then
\begin{equation}\label{eq:doppio1}
\int_{\Sigma_{3}^{\infty}}\,\frac{\partial G(x-y,t-s)}{\partial\overline{\nu}_{y,s}}\,d\sigma_{y,s} 
=
\begin{cases}
1  &(x,t)\in \Omega\times(-\infty,+\infty) \\ 
 \frac{1}{2} &(x,t)\in \partial\Omega\times(-\infty,+\infty)\\
 0 &  (x,t)\in (\mathbb{R}^n\setminus\overline{\Omega})\times(-\infty,+\infty).
\end{cases}
\end{equation}

Suppose now $p>1$ and let $\vf\in L^{p}(\Sigma_3)$.  
By means of the arguments used in  \cite[Th. 1.1]{FabesRivHeat} one can show that the integral 
\begin{equation*}
\int_{\Sigma_3}\vf(y,s)\frac{\partial G(x-y,t-s)}{\partial\overline{\nu}_{y,s}}\,d\sigma_{y,s}
\end{equation*}
does exist as a singular integral  for almost every $(x,t)\in\Sigma_3$.

In order to prove \eqref{eq:formulasalto_doppio}, consider first a function $\vf\in\Ctond^{1}(\Sigma_3)$. This means that
$\vf\in C^{1}(\Sigma^{\infty}_3)$ and its support is contained in $\partial\Omega\times(0,T)$. Let $(x_0,t)$ be in $\Sigma_3$; keeping in mind 
\eqref{eq:doppio1}, we have
\begin{gather*}
\int_{\Sigma_3}\vf(y,s)\frac{\partial G(x-y,t-s)}{\partial\overline{\nu}_{y,s}}d\,\sigma_{y,s} \\
=\int_{\Sigma^{\infty}_3}[\vf(y,s)-\vf(x_0,t)]\frac{\partial G(x-y,t-s)}{\partial\overline{\nu}_{y,s}}d\,\sigma_{y,s}
       + \vf(x_0,t)
\end{gather*}
for any $(x,t)\in \Omega_T$.  As far as the integral on the right hand side is concerned, we can write it as
\begin{equation*}
C
 \int_{-\infty}^{t}\frac{ds}{(t-s)^{1+n/2}} 
\int_{\partial\Omega}
\langle x-y,\nu(y)\rangle [\vf(y,s)-\vf(x_0,t)] e^{-\frac{\langle A^{-1}(x-y),(x-y)\rangle}{4(t-s)}}\, 
 d\sigma_{y}\, .
\end{equation*}
Similarly to what we did for \eqref{eq:equalto}, this integral is equal to
\begin{equation}\label{eq:integrand}
C
\int_{0}^{+\infty} u^{\frac{n}{2}-1}e^{-u}\, du 
\int_{\partial\Omega}\frac{\langle x-y,\nu(y)\rangle}{\langle A^{-1}(x-y),(x-y)\rangle^{\frac{n}{2}}} \Lambda(x_0,x,y,t,u)\, d\sigma_y
\end{equation}
where
$$
\Lambda(x_0,x,y,t,u)=\vf\left(x,t-\langle A^{-1}(x-y),(x-y)\rangle/4u\right)-\vf(x_0,t)\, .
$$

Since there exists a constant $K$ such that the modulus of the integrand in \eqref{eq:integrand} can be majorized by
$$
K\, \frac{u^{\frac{n}{2}-1}e^{-u}}{|y-x_0|^{n-2}},
$$
we may apply Lebesgue's dominated convergence theorem and, recalling \eqref{eq:doppio1}, conclude that 
\begin{gather*}
\lim_{(x,t)\to (x_0,t)^{+}}\int_{\Sigma_3}\vf(y,s)\,\frac{\partial G(x-y,t-s)}{\partial\overline{\nu}_{y,s}}\,d\sigma_{y,s}\\
 = \int_{\Sigma^{\infty}_3}[\vf(y,s)-\vf(x_0,t)]\frac{\partial G(x_0-y,t-s)}{\partial\overline{\nu}_{y,s}}d\,\sigma_{y,s}
       + \vf(x_0,t) \\
       =  \int_{\Sigma_3}\vf(y,s)\frac{\partial G(x_0-y,t-s)}{\partial\overline{\nu}_{y,s}}d\,\sigma_{y,s}
       - \vf(x_0,t)  \int_{\Sigma^{\infty}_3}\frac{\partial G(x_0-y,t-s)}{\partial\overline{\nu}_{y,s}}d\,\sigma_{y,s} \\
       + \vf(x_0,t) =
       \int_{\Sigma_3}\vf(y,s)\frac{\partial G(x_0-y,t-s)}{\partial\overline{\nu}_{y,s}}d\,\sigma_{y,s}+
       \frac{1}{2}\vf(x_0,t)\, .
\end{gather*}

The same proof works for $(x,t)\to (x_0,t)^{-}$. Formula \eqref{eq:formulasalto_doppio} is then proved if 
$\vf\in\Ctond^{1}(\Sigma_3)$.

Let now $\vf\in L^{p}(\Sigma_3)$.
Given $\alpha$, $0<\alpha<1$, consider the non tangential maximal function
\begin{equation*}
U^{*}(x,t)=\sup\{\abs{U(\xi,t)}\,\,:\,\,\xi\in\Omega\text{,}\,\,\abs{\xi-x}<\delta\text{,}\,\,\langle\xi-x,\nu(x)\rangle\,\,>\alpha\abs{\xi-x}\}\, ,
\end{equation*}
where
\begin{equation*}
U(x,t)=\int_{\Sigma_3}\vf(y,s)\frac{\partial G(x-y,t-s)}{\partial\overline{\nu}_{y,s}}\,d\sigma_{y,s}.
\end{equation*}
Reasoning as  in \cite[Th. 2.1]{FabesRivHeat}, there exists a constant $\delta=\delta_{\Omega,\alpha}$ such that  $U^*$
belongs to $L^{p}(\Sigma_3)$ and 
\begin{equation*}
\norma{U^{*}}_{L^{p}(\Sigma_3)}\leq C\norma{\vf}_{L^{p}(\Sigma_3)}
\end{equation*}
with $C$ independent of $\vf$. 
By standard arguments, the pointwise limit \eqref{eq:formulasalto_doppio} 
follows from what we have already proved when $\vf\in\Ctond^{1}(\Sigma_3)$
(see \cite[p.172--173]{FabesJodeitRiv}).

The case $p=1$  is slightly different, because the double layer potential is bounded
from $L^1(\Sigma_3)$ into weak-$L^1(\Sigma_3)$. Nevertheless the pointwise 
limit  \eqref{eq:formulasalto_doppio} is  still valid, see the Remark  in \cite[p.173]{FabesJodeitRiv}.

The invertibility of operator \eqref{eq:coperator} can be proved as in \cite[Th. 1.3]{FabesRivHeat}.
\end{proof}

A similar result holds for the conormal derivative of 
the single layer parabolic potential 
\begin{equation*}
\int_{\Sigma_3}\vf(y,s)G(x-y,t-s)\,d{\sigma}_{y,s} \, .
\end{equation*}

\begin{teor}
\label{teor:salto_conorm}
Let $\vf\in L^{p}(\Sigma_3)$ ($1\leq p<\infty$). Then 
\begin{equation}
\label{eq:formulasalto_conorm}
\begin{gathered}
\lim_{(x,t)\to (x_0,t)^{\pm}}\int_{\Sigma_3}\vf(y,s)\,\frac{\partial G(x-y,t-s)}{\partial\overline{\nu}_{x_{0},t}}\,d\sigma_{y,s} =\mp\frac{1}{2}\vf(x_0,t)+\\
                                                         +\int_{\Sigma_3}\vf(y,s)\,\frac{\partial G(x_0-y,t-s)}{\partial\overline{\nu}_{x_{0},t}}\,d\sigma_{y,s}
\end{gathered}
\end{equation} 
for almost every $(x_0,t) \in \Sigma_3$. As in Theorem \ref{teor:salto_doppio}, the limit $(x,t)\to (x_0,t)^{+}$ ($(x,t)\to (x_0,t)^{-}$) has to be understood as 
an internal (external) angular boundary value and the integral on the right hand side exists as a singular integral. 
Moreover, for any $1<p<\infty$ and for any real $c\neq 0$, the operator
\begin{equation}\label{eq:coperator2}
c\, \vf(x,t) + \int_{\Sigma_3}\vf(y,s)\,\frac{\partial G(x-y,t-s)}{\partial\overline{\nu}_{x,t}}\,d\sigma_{y,s}
\end{equation}
is invertible on $L^{p}(\Sigma_{3})$.
\end{teor}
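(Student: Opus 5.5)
The plan is to mirror the proof of Theorem \ref{teor:salto_doppio}, moving the conormal derivative from the integration variable to the fixed boundary point $x_0$. The first step is to compute $\partial G(x-y,t-s)/\partial\overline{\nu}_{x_0,t}$ explicitly. Since $\overline{\nu}=(A\nu,0)$ and $\nabla_x \langle A^{-1}(x-y),x-y\rangle = 2A^{-1}(x-y)$, one gets
\begin{equation*}
\frac{\partial G(x-y,t-s)}{\partial\overline{\nu}_{x_0,t}} = -\frac{\langle x-y,\nu(x_0)\rangle}{2(t-s)}\,G(x-y,t-s),
\end{equation*}
while the double layer kernel is $+\langle x-y,\nu(y)\rangle G/(2(t-s))$. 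Thus the kernel here is the adjoint kernel with the normal evaluated at $x_0$ rather than at $y$, and with the opposite sign. This sign accounts for the $\mp$ in \eqref{eq:formulasalto_conorm} in place of $\pm$.

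The key step is the decomposition, for $(x,t)$ near $(x_0,t)$,
\begin{equation*}
\int_{\Sigma_3}\vf\,\frac{\partial G(x-y,t-s)}{\partial\overline{\nu}_{x_0,t}}\,d\sigma_{y,s}
= -\int_{\Sigma_3}\vf\,\frac{\partial G(x-y,t-s)}{\partial\overline{\nu}_{y,s}}\,d\sigma_{y,s}
+ \int_{\Sigma_3}\vf(y,s)\,\frac{\langle x-y,\nu(y)-\nu(x_0)\rangle}{2(t-s)}G(x-y,t-s)\,d\sigma_{y,s}.
\end{equation*}
By Theorem \ref{teor:salto_doppio}, the first term on the right has angular limits $\mp\frac12\vf(x_0,t)$ plus its singular integral. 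Applied to the first term alone, these limits already give the $\mp\frac12$ jump claimed in \eqref{eq:formulasalto_conorm}.

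The main obstacle is showing that the second term has no jump and converges, as $(x,t)\to(x_0,t)$ nontangentially, to its value at $x_0$. For $\vf\in\Ctond^{1}(\Sigma_3)$ I would substitute $u=\langle A^{-1}(x-y),x-y\rangle/(4(t-s))$ as in \eqref{eq:integrand}. The kernel then becomes $u^{n/2-1}e^{-u}$ times
\begin{equation*}
\frac{\langle x-y,\nu(y)-\nu(x_0)\rangle}{\langle A^{-1}(x-y),x-y\rangle^{n/2}}.
\end{equation*}
Since $\partial\Omega$ is only $C^1$, $|\nu(y)-\nu(x_0)|\le\omega(|y-x_0|)$ with $\omega$ a modulus of continuity, not Hölder. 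Inside a nontangential cone, $|x-y|\ge c|x_0-y|$, so this factor is bounded by $\omega(|y-x_0|)|y-x_0|^{1-n}$. That bound is not integrable on $\partial\Omega$ for a general modulus, so dominated convergence alone might fail.

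I would therefore instead follow Fabes--Rivière \cite[Th. 1.1, Th. 2.1]{FabesRivHeat}, whose treatment of $C^1$ cylinders handles exactly this kernel as a Calderón-type commutator in the normal. Their results give the existence a.e. of the singular integral, the $L^p$ bound on the nontangential maximal function of the single layer conormal derivative, and the pointwise limit for smooth densities, with the $\mp\frac12$ jump coming from the identity \eqref{eq:doppio1}. Density of $\Ctond^{1}(\Sigma_3)$ together with the maximal estimate then extends the limit to all $\vf\in L^p(\Sigma_3)$ for $p>1$, via the standard argument of \cite[p.172--173]{FabesJodeitRiv}. For $p=1$, the weak-type bound and the Remark in \cite[p.173]{FabesJodeitRiv} suffice.

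For invertibility, observe that the operator \eqref{eq:coperator2} is the $L^{p'}$-adjoint, with time reversed, of the operator \eqref{eq:coperator} built from $H^{*}$, up to the sign of the kernel. The adjoint version of Theorem \ref{teor:salto_doppio} for $H^{*}$ is proved identically, and invertibility of an operator on $L^{p'}$ implies invertibility of its adjoint on $L^{p}$. Alternatively, one can repeat the argument of \cite[Th. 1.3]{FabesRivHeat} directly.
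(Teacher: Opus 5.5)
Your kernel identity is correct, and your invertibility argument is a valid alternative to the paper's appeal to \cite[Th. 1.4]{FabesRivHeat}. The transpose of \eqref{eq:coperator2} is exactly the first operator of Theorem~\ref{teor:salto1}, and the reflection $t\mapsto T-t$ conjugates it onto \eqref{eq:coperator} with the same $c$ (no sign change is involved). So invertibility on $L^{p'}(\Sigma_3)$ from Theorem~\ref{teor:salto_doppio} gives invertibility on $L^{p}(\Sigma_3)$.

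The gap is in the jump relation, at precisely the step you flag. You observe that dominated convergence fails for the remainder on a merely $C^1$ boundary, but you do not repair it. Instead you assert that \cite{FabesRivHeat} provides ``the pointwise limit for smooth densities''. That limit on a dense class is exactly what the density argument of \cite{FabesJodeitRiv} takes as input, and it is the only part of the proof specific to this theorem. Moreover, \cite{FabesRivHeat} treats only $A=I$, so quoting it requires at least the change of variables $x=A^{1/2}\xi$, after which your decomposition plays no role. Nor can the $\mp\frac12$ come from \eqref{eq:doppio1} alone. Through Theorem~\ref{teor:salto_doppio} it accounts only for the double-layer part; the absence of a jump in the remainder is a separate, nontrivial statement that you leave unproved.

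The missing idea is to subtract the constant $\vf(x_0,t)$ rather than the normal $\nu(x_0)$. Take $\vf\in\Ctond^{1}(\Sigma_3)$ and substitute $u=\langle A^{-1}(x-y),x-y\rangle/(4(t-s))$. The part with density $\vf(y,s)-\vf(x_0,t)$ then passes to the limit by dominated convergence. The reason is that $|\vf(y,s)-\vf(x_0,t)|\le L(|y-x_0|+t-s)$ supplies the extra power that the non-Dini modulus $\omega$ cannot. This works with the full kernel $\langle x-y,\nu(x_0)\rangle$, with no cancellation from the normal needed. The constant part, taken over $\Sigma_3^{\infty}$, equals $-\vf(x_0,t)\int_{\partial\Omega}\frac{\partial}{\partial\overline{\nu}_{x_0}}s(x,y)\,d\sigma_y$. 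Its angular limits are $\mp\frac12\vf(x_0,t)$ plus its value at $x_0$, by the classical elliptic jump formula for the conormal derivative of the single layer potential with density $1$. This is the paper's argument.

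Within your framework this elliptic fact cannot be avoided. With constant density on $\Sigma_3^{\infty}$, your remainder integrates in time to minus the sum of the elliptic double layer and the elliptic conormal single layer, both with density $1$. Its continuity at $x_0$ is therefore equivalent to that same elliptic jump formula.
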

\begin{proof}
The proof is similar to that of Theorem \ref{teor:salto_doppio}. The only difference concerns
the proof of the jump relation when  $\vf\in\Ctond^{1}(\Sigma_3)$. Now we have
$$
\int_{\Sigma_{3}^{\infty}}\,\frac{\partial G(x-y,t-s)}{\partial\overline{\nu}_{x,t}}\,d\sigma_{y,s} =
- \int_{\partial\Omega} \frac{\partial}{\partial\overline{\nu}_{x}} s(x,y)\, d\sigma_y
$$
and then a formula like \eqref{eq:doppio1} 
does not hold. However it is known  that
$$
 \lim_{x\to x_0^{\pm}} \int_{\partial\Omega} \frac{\partial}{\partial\overline{\nu}_{x_0}} s(x,y)\, d\sigma_y =
 \pm \frac{1}{2} + \int_{\partial\Omega} \frac{\partial}{\partial\overline{\nu}_{x_0}} s(x_0,y)\, d\sigma_y
$$
(see e.g. \cite[p.35]{Miranda})
and then
\begin{gather*}
\lim_{(x,t)\to (x_0,t)^{+}}\int_{\Sigma_3}\vf(y,s)\,\frac{\partial G(x-y,t-s)}{\partial\overline{\nu}_{x_{0},t}}\,d\sigma_{y,s}\\
= \lim_{(x,t)\to (x_0,t)^{+}}\Bigg(
\int_{\Sigma^{\infty}_3}[\vf(y,s)-\vf(x_0,t)]\frac{\partial G(x-y,t-s)}{\partial\overline{\nu}_{x_{0},t}}d\,\sigma_{y,s}\\
       + \vf(x_0,t) \int_{\Sigma^{\infty}_3}\frac{\partial G(x-y,t-s)}{\partial\overline{\nu}_{x_{0},t}}d\,\sigma_{y,s}
\Bigg)\\
 = \int_{\Sigma^{\infty}_3}[\vf(y,s)-\vf(x_0,t)]\frac{\partial G(x_0-y,t-s)}{\partial\overline{\nu}_{x_{0},t}}d\,\sigma_{y,s} \\
       + \vf(x_0,t) \Bigg( -\frac{1}{2} +  \int_{\Sigma^{\infty}_3}\frac{\partial G(x_{0}-y,t-s)}{\partial\overline{\nu}_{x_{0},t}}d\,\sigma_{y,s}\Bigg) \\
      = \int_{\Sigma_3}\vf(y,s)\frac{\partial G(x_0-y,t-s)}{\partial\overline{\nu}_{x_{0},t}}d\,\sigma_{y,s}-
       \frac{1}{2}\vf(x_0,t)\, .
\end{gather*}

As far as the invertibility of \eqref{eq:coperator2} is concerned, this can be proved 
as in \cite[Th. 1.4, p.185]{FabesRivHeat}.
\end{proof}

\begin{teor}\label{th:t->0}
    Let $\vf\in L^{p}(\Sigma_2)$ ($1\leq p<\infty$). Then
    \begin{equation}\label{eq:limt->0}
\lim_{t\to 0^{+}} \int_{\Sigma_2}\vf(y,0)\, G(x-y,t)\, dy = \vf(x,0)
\end{equation}
for almost every $(x,0) \in \Sigma_2$.
\end{teor}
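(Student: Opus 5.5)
Extend $\vf$ by zero outside $\Omega$, so that $\vf\in L^{p}(\mathbb{R}^{n})$, and regard the integral in \eqref{eq:limt->0} as the convolution $(\vf*G(\cdot,t))(x)$ over $\mathbb{R}^n$. The claim then becomes the statement that the kernels $G(\cdot,t)$ form an approximate identity which converges at every Lebesgue point of $\vf$. Since almost every point of $\Omega$ is a Lebesgue point of the extended function, this gives the result a.e.\ on $\Sigma_2$.

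First I will record the properties of the kernel. Changing variables $z=A^{-1/2}x/(2\sqrt t)$ shows that $\int_{\mathbb{R}^n}G(x,t)\,dx=1$ for every $t>0$ and that $G(x,t)=t^{-n/2}\Phi(x/\sqrt t)$ with $\Phi(z)=(4\pi)^{-n/2}\abs{A}^{-1/2}e^{-\langle A^{-1}z,z\rangle/4}$. Since $A$ is positive definite, $\langle A^{-1}z,z\rangle\geq\lambda\abs{z}^2$ for some $\lambda>0$. Hence $\Phi(z)\le \Psi(\abs{z})$, where $\Psi(r)=C e^{-\lambda r^2/4}$ is radial, decreasing and integrable.

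Next, fix a Lebesgue point $x$ of $\vf$ and write
\[
\int_{\Omega}\vf(y)G(x-y,t)\,dy-\vf(x)=\int_{\mathbb{R}^n}[\vf(x-z)-\vf(x)]\,t^{-n/2}\Phi(z/\sqrt t)\,dz .
\]
This is the step that carries the real content. I will apply the standard theorem on radially decreasing integrable majorants (Stein's theorem on approximate identities): the modulus of the right-hand side is bounded by
\[
\int_{\mathbb{R}^n}\abs{\vf(x-z)-\vf(x)}\,t^{-n/2}\Psi(\abs{z}/\sqrt t)\,dz .
\]
Set $\omega(r)=\int_{\abs{z}<r}\abs{\vf(x-z)-\vf(x)}\,dz$, so that $\omega(r)=o(r^n)$ as $r\to0$. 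Write the integral in polar form and integrate by parts in $r$. For a small $\delta$ this splits into two parts. The part with $r<\delta$ is at most $\sup_{r<\delta}\omega(r)r^{-n}$ times $\int\Psi$. The part with $r>\delta$ is controlled by the tail of $\Psi$ together with $\norma{\vf}_{L^1}$ on bounded sets plus $\abs{\vf(x)}$. Here local integrability holds because $\vf\in L^p$ with $p\ge1$ is supported in the bounded set $\Omega$, so $\vf\in L^1(\mathbb{R}^n)$. Letting $t\to0^+$ and then $\delta\to0$ gives the limit $0$.

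The main obstacle is bookkeeping rather than conceptual difficulty: the anisotropic matrix $A$ has to be absorbed into the radial majorant $\Psi$, as above, and the tail term has to vanish. The tail vanishes because $t^{-n/2}\Psi(\delta/\sqrt t)\to 0$ and $\int_{\abs{z}>\delta/\sqrt t}\Psi(\abs{z})\,dz\to 0$ as $t\to 0^+$. Alternatively, I can simply cite the approximate-identity theorem in Stein's \emph{Singular Integrals}, which covers kernels with a radially decreasing integrable majorant, and conclude that \eqref{eq:limt->0} holds at every Lebesgue point, hence for almost every $(x,0)\in\Sigma_2$.
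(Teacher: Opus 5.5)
Your argument is correct and is essentially the paper's: both reduce the claim to the standard Lebesgue-point estimate for $G(\cdot,t)$ as an approximate identity. The only difference is the normalization step. The paper gets $\int_{\Omega}G(x-y,t)\,dy\to 1$ by taking $u\equiv 1$ in the Stokes formula \eqref{eq:StokesH}, whereas your extension by zero uses $\int_{\mathbb{R}^n}G(x-y,t)\,dy=1$ directly and absorbs the contribution of $\mathbb{R}^n\setminus\Omega$ into your tail estimate, which makes your version a bit more self-contained.
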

\begin{proof}
Putting $u\equiv 1$ in formula \eqref{eq:StokesH}, we find
$$
\int_{\Sigma_3}\frac{\partial }{\partial\overline{\nu}_{y,s}}G(x-y,t-s)\, d\sigma_{y,s} 
+\int_{\Sigma_2}G(x-y,t)\, dy =1
$$
for any $(x,t)\in\Omega_{T}$. 
Letting $t\to 0^{+}$, we obtain \eqref{eq:limt->0}  when $\vf\equiv 1$. 
Then the thesis will follow if we show that
$$
\lim_{t\to 0^{+}} \int_{\Sigma_2}[\vf(y,0)-\vf(x,0)])\, G(x-y,t)\, dy=0
$$
for almost every $(x,0) \in \Sigma_2$.  This can be proved in Lebesgue points by
standard arguments (see, e.g., \cite[p.91--92]{AmerioCalore}).
\end{proof}

The same results hold for the potentials related to the operator $H^*$.

\begin{teor}
\label{teor:salto1}
Let $\vf\in L^{p}(\Sigma_3)$ ($1\leq p<\infty$). Then 
\begin{gather}
\begin{gathered}
 \lim_{(y,s)\to (y_0,s)^{\pm}}\int_{\Sigma_3}\vf(x,t)\,\frac{\partial G(x-y,t-s)}{\partial\overline{\nu}_{x,t}}\,d\sigma_{x,t}  \\
                            =\pm\frac{1}{2}\vf(y_0,s)    +\int_{\Sigma_3}\vf(x,t)\,\frac{\partial G(x-y_{0},t-s)}{\partial\overline{\nu}_{x,t}}\,d\sigma_{x,t} 
\end{gathered}   \label{eq:formulasalto1}\\
\begin{gathered}
 \lim_{(y,s)\to (y_0,s)^{\pm}}\int_{\Sigma_3}\vf(x,t)\,\frac{\partial G(x-y,t-s)}{\partial\overline{\nu}_{y_{0},s}}\,d\sigma_{x,t} \\
                    =\mp\frac{1}{2}\vf(y_0,s)     +\int_{\Sigma_3}\vf(x,t)\,\frac{\partial G(x-y_{0},t-s)}{\partial\overline{\nu}_{y_{0},s}}\,d\sigma_{x,t} 
\end{gathered}   \label{eq:formulasalto2}
\end{gather}
for almost every $(y_0,s) \in \Sigma_3$. As in Theorem \ref{teor:salto_doppio}, the limits $(y,s)\to (y_0,s)^{+}$ ($(y,s)\to (y_0,s)^{-}$) have to be understood as 
internal (external) angular boundary values and the integrals on the right hand side exist as singular integrals. Moreover, for any $1<p<\infty$ and for any real $c\neq 0$, the operators
\begin{gather*}
c\, \vf(y,s) + \int_{\Sigma_3}\vf(x,t)\,\frac{\partial G(x-y,t-s)}{\partial\overline{\nu}_{x,t}}\,d\sigma_{x,t}\, ,\\
c\, \vf(y,s) + \int_{\Sigma_3}\vf(x,t)\,\frac{\partial G(x-y,t-s)}{\partial\overline{\nu}_{y,s}}\,d\sigma_{x,t} 
\end{gather*}
are invertible on $L^{p}(\Sigma_{3})$.
\end{teor}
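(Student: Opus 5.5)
The natural approach is to reduce Theorem \ref{teor:salto1} to Theorems \ref{teor:salto_doppio} and \ref{teor:salto_conorm} by a time reversal. For a function $\psi$ on $\mathbb{R}^{n+1}$ set $\tilde\psi(x,t)=\psi(x,T-t)$. The map $(x,t)\mapsto(x,T-t)$ sends $\Sigma_3$ onto itself, preserves $d\sigma$, and maps $L^p(\Sigma_3)$ isometrically onto itself. It also preserves the spatial normal $\nu$, and hence the conormal $\overline{\nu}=(A\nu,0)$.

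\textbf{Step 1: the kernel in reversed time.} Put $x'=y$, $t'=T-s$, $y'=x$, $s'=T-t$. Then $t-s=t'-s'$, and since $G(z,\tau)$ depends on $z$ only through $\langle A^{-1}z,z\rangle$, which is even in $z$,
$$
G(x-y,t-s)=G(x'-y',t'-s').
$$
Differentiation in the first group of variables, $(x,t)$, becomes differentiation in the second group, $(y',s')$, and vice versa. The normal is the same because the spatial point is unchanged.

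\textbf{Step 2: the first potential.} With $\tilde\vf(y',s')=\vf(y',T-s')$,
$$
\int_{\Sigma_3}\vf(x,t)\,\frac{\partial G(x-y,t-s)}{\partial\overline{\nu}_{x,t}}\,d\sigma_{x,t}
=\int_{\Sigma_3}\tilde\vf(y',s')\,\frac{\partial G(x'-y',t'-s')}{\partial\overline{\nu}_{y',s'}}\,d\sigma_{y',s'} .
$$
This is exactly the double layer potential of Theorem \ref{teor:salto_doppio}, with density $\tilde\vf\in L^p(\Sigma_3)$, evaluated at $(x',t')=(y,T-s)$. When $(y,s)\to(y_0,s)$ angularly from inside (outside), $(x',t')\to(y_0,T-s)$ angularly from the same side, because spatial cones are untouched. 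Theorem \ref{teor:salto_doppio} therefore gives, for a.e. point, the limit $\pm\frac12\tilde\vf(y_0,T-s)=\pm\frac12\vf(y_0,s)$ plus the singular integral. Changing variables back yields \eqref{eq:formulasalto1}. The exceptional null set is carried to a null set by the isometry.

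\textbf{Step 3: the second potential.} In the same way, the second potential becomes
$$
\int_{\Sigma_3}\tilde\vf(y',s')\,\frac{\partial G(x'-y',t'-s')}{\partial\overline{\nu}_{x'_0,t'}}\,d\sigma_{y',s'},
$$
which is the potential of Theorem \ref{teor:salto_conorm}. That theorem gives the sign $\mp\frac12$, which is \eqref{eq:formulasalto2}.

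\textbf{Step 4: invertibility.} Let $R$ denote the reflection $\vf\mapsto\tilde\vf$. Each operator in the statement is of the form $R\circ(c I+K)\circ R$, where $cI+K$ is the operator \eqref{eq:coperator} or \eqref{eq:coperator2}, respectively. Since $R$ is an invertible isometry of $L^p(\Sigma_3)$, invertibility follows from Theorems \ref{teor:salto_doppio} and \ref{teor:salto_conorm}.

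\textbf{Main obstacle.} The step requiring the most care is the bookkeeping in Step 1. One must check that the normal derivative in the "wrong" variable group really transforms into the normal derivative in the other group, including the sign. The relevant identity is $\nabla_x G(x-y,\cdot)=-\nabla_y G(x-y,\cdot)$: after the swap $x\leftrightarrow y$, the relabelled kernel $G(x'-y',\cdot)$ differentiated in $y'$ reproduces $\nabla_x$ of the original with the correct sign, so no stray minus sign appears. One must also check that the singular integrals are defined consistently under the change of variables, i.e. that the truncations are symmetric under the reflection. If this direct reduction caused trouble, I would instead repeat the proof of Theorem \ref{teor:salto_doppio} verbatim, using $\int_{\Sigma_3^\infty}\partial G/\partial\overline{\nu}_{x,t}\,d\sigma_{x,t}$ and the same reduction to the elliptic potentials.
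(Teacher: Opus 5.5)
Your proof is correct, but it takes a different route from the paper. The paper does not transfer Theorems \ref{teor:salto_doppio} and \ref{teor:salto_conorm} to the adjoint setting. It recomputes the constant-density potential, now integrating over $t\in(s,+\infty)$, to get $\int_{\Sigma_{3}^{\infty}}\partial G(x-y,t-s)/\partial\overline{\nu}_{x,t}\,d\sigma_{x,t}=-\int_{\partial\Omega}\partial s(x,y)/\partial\overline{\nu}_{x}\,d\sigma_x$. It then asserts that the earlier proofs can be repeated: dominated convergence for smooth compactly supported densities, the maximal-function density argument, and the Fabes--Rivi\`ere invertibility argument. This is essentially your fallback. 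Your substitution $(x',t')=(y,T-s)$, $(y',s')=(x,T-t)$ instead identifies each adjoint potential of $\vf$ with the corresponding $H$-potential of $R\vf$. It uses only that $G(\cdot,\tau)$ is even in the space variable and that $t'-s'=t-s$. All conclusions then follow at once, with nothing re-verified: both jump formulas, the a.e.\ existence of the singular integrals, and invertibility as $R(cI+K)R$. On the obstacle you flag, $t-s$ is preserved exactly, not just up to sign, and $|x-y|$ is preserved. So any truncation defining the singular integrals is mapped onto itself. The conormal derivatives also correspond automatically, since you differentiate the same function under a relabelling of variables, so there is no sign to check. The paper's route shows the mechanism (reduction to the elliptic kernel $s$) directly for the adjoint kernels, but leaves the details, notably invertibility, to ``as before''. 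Yours is shorter and fully rigorous given the earlier theorems. Its cost is reliance on the spatial evenness of $G$, which holds because $E$ has constant coefficients and no first-order terms, as the paper assumes throughout.
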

\begin{proof}
We start by observing that, reasoning as in the proof of Theorem \ref{teor:salto_doppio},  we have
\begin{gather*}
\int_{\Sigma_{3}^{\infty}}\,\frac{\partial G(x-y,t-s)}{\partial\overline{\nu}_{x,t}}\,d\sigma_{x,t}=
\int_{\partial\Omega}d\sigma_{x} \int_{s}^{+\infty} \frac{\partial G(x-y,t-s)}{\partial\overline{\nu}_{x,t}}\, dt
\\
=  - \int_{\partial\Omega} \frac{\partial}{\partial\overline{\nu}_{x}} s(x,y)\, d\sigma_x\, .
\end{gather*}
Then we can repeat that proof to obtain \eqref{eq:formulasalto1}. In a similar way we get
\eqref{eq:formulasalto2} as in Theorem \ref{teor:salto_conorm}.
\end{proof}

\begin{teor}
    Let $\vf\in L^{p}(\Sigma_1)$ ($1\leq p<\infty$). Then
   $$
\lim_{s\to T^{-}}\int_{\Sigma_1}\vf(x,T)\, G(x-y,T-s)\, dx = \vf(y,T)
$$
for almost every $(y,T) \in \Sigma_1$.
\end{teor}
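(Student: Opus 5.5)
The plan mirrors the proof of Theorem \ref{th:t->0}, with the roles of $(x,t)$ and $(y,s)$ exchanged and the representation formula \eqref{eq:StokesH*} used in place of \eqref{eq:StokesH}. The proof has two steps: first establish the statement for constant densities, then reduce the general case to an approximate-identity estimate at Lebesgue points.

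\emph{Step 1: the case $\vf\equiv 1$.} Take $u\equiv 1$ in \eqref{eq:StokesH*}. Then $H^{*}u=0$, $\partial u/\partial\overline{\nu}=0$, and the $\Sigma_2$ term vanishes for $s>0$. For every $(y,s)\in\Omega_T$ this gives
$$
\int_{\Sigma_3}\frac{\partial G(x-y,t-s)}{\partial\overline{\nu}_{x,t}}\,d\sigma_{x,t}+\int_{\Sigma_1}G(x-y,T-s)\,dx=1 .
$$
Fix $y\in\Omega$ and let $s\to T^{-}$. The lateral integral runs only over $t\in(s,T)$, since $G$ vanishes for $t\le s$. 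Its kernel is bounded by $C(t-s)^{-1-n/2}\,e^{-c\,\mathrm{dist}(y,\partial\Omega)^2/(t-s)}$, which is uniformly bounded, so the integral is $O(T-s)\to 0$. Hence
$$
\int_{\Sigma_1}G(x-y,T-s)\,dx\to 1 \quad\text{for every } y\in\Omega,
$$
which is the statement for $\vf\equiv1$.

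\emph{Step 2: the general case.} It now suffices to show that
$$
\lim_{s\to T^{-}}\int_{\Sigma_1}[\vf(x,T)-\vf(y,T)]\,G(x-y,T-s)\,dx=0
$$
at every Lebesgue point $y$ of $\vf(\cdot,T)$. This holds almost everywhere, since $\vf\in L^p(\Sigma_1)\subset L^1_{\rm loc}$.

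Set $\tau=T-s$ and extend $\vf$ by zero outside $\Omega$. The kernel $G(\cdot,\tau)$ is a positive radial-type approximate identity. Because $A$ is positive definite, $G(z,\tau)\le C\tau^{-n/2}e^{-\lambda|z|^2/(4\tau)}$, where $\lambda$ is the smallest eigenvalue of $A^{-1}$. This majorant is radially decreasing and integrable uniformly in $\tau$, so the standard argument applies (integrate by parts in the radial variable, or invoke the classical lemma that convolution with a radially decreasing integrable majorant is dominated by the Hardy--Littlewood maximal function; cf.\ \cite[p.91--92]{AmerioCalore}). The conclusion is that the integral is bounded by $C\sup_{r>0} r^{-n}\int_{|x-y|<r}|\vf(x,T)-\vf(y,T)|\,dx$. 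Splitting into $|x-y|<\delta$ and $|x-y|\ge\delta$ makes this tend to zero. The far part is controlled by $\sup_{|z|\ge\delta}G(z,\tau)\,\|\vf\|_{L^1}$ together with $|\vf(y,T)|\int_{|z|\ge\delta}G(z,\tau)\,dz$, and both terms vanish as $\tau\to0$.

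The only delicate points are the $O(T-s)$ estimate of the lateral term in Step 1, which uses that $y$ is interior, and the domination of the anisotropic Gaussian by an isotropic radially decreasing majorant. Neither step is a real obstacle.
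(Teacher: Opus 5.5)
Your proof is correct and takes the paper's route. The paper only says the argument is the same as for Theorem~\ref{th:t->0}: put $u\equiv 1$ in the Stokes formula (here \eqref{eq:StokesH*}) to get the case $\vf\equiv 1$, then treat $\vf(x,T)-\vf(y,T)$ at Lebesgue points by the standard approximate-identity argument. You have supplied the details the paper leaves implicit, namely the $O(T-s)$ bound on the lateral term and the isotropic Gaussian majorant. One cosmetic point: the near-part bound should use $\sup_{0<r<\delta}$ rather than $\sup_{r>0}$, which is what your splitting effectively does.
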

\begin{proof}
The proof is the same as in Theorem \ref{th:t->0}.
\end{proof}

\section{The classes $\mathscr{A}^{p}$ and  $\mathscr{A}^{p}_{*}$}\label{sec:AeA*}

Let $p$ be a real number such that $1\leq p<\infty$. 
We give the following

\begin{dfz}
Let $u\in L^{p}(\Omega_T)$. We say that $u$ belongs to the class $\mathscr{A}^{p}(\Omega_T)$ 
 if there exist two functions $A\in L^{p}(\Sigma)$ and $B\in L^{p}(\Sigma_3)$ such that 
\begin{equation}\label{eq:defAp}
\int_{\Omega_T}u\,  H^{*}v\,dxdt  =\int_{\Sigma_3}\left(B\, v -A\, \frac{\partial v}{\partial\overline{\nu}}\right)d\sigma
                             +\int_{\Sigma_1}A\, v\,d\sigma-\int_{\Sigma_2}A\, v\, d\,\sigma
\end{equation}
for all $v$ in $C^{\infty}(\mathbb{R}^{n+1})$.
\end{dfz}

Similarly we define
\begin{dfz}
Let $u\in L^{p}(\Omega_T)$. We say that $u$ belongs to the $\mathscr{A_{*}}^{p}(\Omega_T)$  if there exist two functions $A\in L^{p}(\Sigma)$ and $B\in L^{p}(\Sigma_3)$ such that 
\begin{equation}\label{eq:defA*p}
\int_{\Omega_T}u\,  Hv\,dxdt  =\int_{\Sigma_3}\left(B\, v -A\, \frac{\partial v}{\partial\overline{\nu}}\right)d\sigma
                             -\int_{\Sigma_1}A\, v\,d\sigma+\int_{\Sigma_2}A\, v\, d\,\sigma
\end{equation}
for all $v$ in $C^{\infty}(\mathbb{R}^{n+1})$.
\end{dfz}

\begin{oss}\label{rmk:smooth}
If $u\in \mathscr{A}^{p}(\Omega_T)$ ($u\in \mathscr{A_{*}}^{p}(\Omega_T)$)
then $u$ is a smooth solution in $\Omega_T$ of the equation $Hu=0$ ($H^{*}u=0$). This follows from the
hypoellipticity of the operator $H$  ($H^*$) (see, e.g., \cite[p.146]{FollandPDEs}, where
the arguments used for the heat operator can be easily adapted to $H$ and $H^*$).
\end{oss}

\begin{oss}\label{rmk:neighb}
In conditions \eqref{eq:defAp} and \eqref{eq:defA*p} it is sufficient to consider functions $v$ which are $C^\infty$ in a neighborhood of $\overline{\Omega}_T$.
\end{oss}

It will be useful to introduce other two classes of functions

\begin{dfz}
We say that $u$ belongs to the class $\mathscr{B}^{p}(\Omega_T)$ 
 if there exist two functions $A\in L^{p}(\Sigma)$ and $B\in L^{p}(\Sigma_3)$ such that 
\begin{equation}\label{eq:defBp}
\begin{gathered}
\int_{\Sigma_3}\left(A(y,s)\, \frac{\partial }{\partial\overline{\nu}_{y,s}}G(x-y,t-s) -B(y,s)\, G(x-y,t-s)\right)d\sigma_{y,s} \\
-\int_{\Sigma_1}A(y,T)\, G(x-y,t-T)\,dy +\int_{\Sigma_2}A(y,0)\, G(x-y,t)\, dy 
\\=
\begin{cases}
u(x,t)  & \text{if }  (x,t)\in\Omega_{T}, \\ 
0 & \text{if } (x,t) \notin \overline{\Omega}_{T}.
\end{cases}
\end{gathered}
\end{equation}
\end{dfz}

\begin{dfz}
 We say that $u$ belongs to the class $\mathscr{B}_{*}^{p}(\Omega_T)$ 
 if there exist two functions $A\in L^{p}(\Sigma)$ and $B\in L^{p}(\Sigma_3)$ such that 
\begin{gather*}
\int_{\Sigma_3}\left(A(x,t)\, \frac{\partial }{\partial\overline{\nu}_{x,t}}G(x-y,t-s) - B(x,t)\, G(x-y,t-s) \right)d\sigma_{x,t} \\
+\int_{\Sigma_1}A(x,T)\, G(x-y,T-s)\,dx -\int_{\Sigma_2}A(x,0)\, G(x-y,-s)\, dx
\\=
\begin{cases}
u(y,s)  & \text{if }  (y,s)\in\Omega_{T}, \\ 
0 & \text{if } (y,s) \notin \overline{\Omega}_{T}.
\end{cases}
\end{gather*}
\end{dfz}

Roughly speaking $\mathscr{A}^{p}(\Omega_T)$ ($\mathscr{A_{*}}^{p}(\Omega_T)$) 
is the class of solutions of the equation
$Hu=0$ ($H^{*}u=0$) such that $u$ and $\partial u/\partial \overline{\nu}$ do exist on the boundary in some
weak sense, while $\mathscr{B}^{p}(\Omega_T)$ ($\mathscr{B_{*}}^{p}(\Omega_T)$)  is the class of solutions of the same equation for which a representation Stokes formula holds. 

Actually these two spaces coincide.

\begin{teor}\label{th:A=B}
    $\mathscr{A}^{p}(\Omega_T)=\mathscr{B}^{p}(\Omega_T)$, 
    $\mathscr{A_{*}}^{p}(\Omega_T)=\mathscr{B_{*}}^{p}(\Omega_T)$.
\end{teor}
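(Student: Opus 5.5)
We prove $\mathscr{A}^{p}(\Omega_T)=\mathscr{B}^{p}(\Omega_T)$ with the same pair $(A,B)$ on both sides. The starred identity follows by the same argument, exchanging the roles of $(x,t)$ and $(y,s)$, replacing $H$ by $H^{*}$, \eqref{eq:StokesH} by \eqref{eq:StokesH*}, and the first identity in \eqref{eq:suppcomp} by the second.

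\emph{Inclusion $\mathscr{A}^{p}\subset\mathscr{B}^{p}$.} Let $u\in\mathscr{A}^{p}(\Omega_T)$ with data $A,B$. Fix $(x,t)\notin\Sigma$ and a test function $\psi\in\Ctond^{\infty}(\mathbb{R}^{n+1})$, and set
$$
v(y,s)=\int_{\mathbb{R}^{n+1}}G(x'-y,t'-s)\,\psi(x',t')\,dx'dt' .
$$
This $v$ is $C^{\infty}$ and satisfies $H^{*}v=-\psi$. It is not compactly supported, but by Remark \ref{rmk:neighb} only its values near $\overline{\Omega}_T$ matter. Inserting $v$ in \eqref{eq:defAp} and using Fubini gives
$$
-\int_{\mathbb{R}^{n+1}}\psi(x',t')\,\Big[\text{left side of }\eqref{eq:defBp}\text{ evaluated at }(x',t')\Big]\,dx'dt'=-\int_{\Omega_T}u\,\psi .
$$
Fubini is justified because $G$ and $\partial_{\overline{\nu}}G$, integrated against $\psi$, are bounded on $\Sigma$. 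Hence the potential $W$ on the left of \eqref{eq:defBp} equals $u\chi_{\Omega_T}$ almost everywhere. Both sides are continuous off $\Sigma$: $W$ is a parabolic potential and solves $HW=0$ off $\Sigma$, while $u$ is smooth in $\Omega_T$ by Remark \ref{rmk:smooth}. So the equality holds pointwise off $\Sigma$, which is \eqref{eq:defBp}.

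\emph{Inclusion $\mathscr{B}^{p}\subset\mathscr{A}^{p}$.} Let $u$ be given by \eqref{eq:defBp}. Multiply by $H^{*}v$ with $v\in C^{\infty}$ and integrate over $\Omega_T$; since the right side vanishes outside $\overline{\Omega}_T$ and $\Sigma$ has measure zero, this is the same as integrating over a large box $Q\supset\overline{\Omega}_T$. Exchange the order of integration. Each term then becomes a boundary integral of $A$ or $B$ against
$$
\int_{Q}G(x-y,t-s)\,H^{*}v(x,t)\,dxdt
$$
or against its conormal derivative in $(y,s)$. Choose $v$ with compact support in $Q$; this is allowed by Remark \ref{rmk:neighb}, after multiplying by a cutoff equal to $1$ near $\overline{\Omega}_T$. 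Then, by the second identity in \eqref{eq:suppcomp}, this inner integral equals $-v(y,s)$. The terms therefore collapse to the right-hand side of \eqref{eq:defAp}, with the signs matching those in \eqref{eq:StokesH} and \eqref{eq:Green}.

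\emph{Main obstacle.} The delicate point is justifying the exchange of integration orders for $L^{p}$ (in particular $L^{1}$) densities. The kernel $\partial_{\overline{\nu}}G$ is not absolutely integrable near $\Sigma_3$, so Fubini cannot be applied naively. The approach is to check the identities first for smooth or continuous $A,B$ and then pass to the limit by density. The passage to the limit uses the $L^{p}\to L^{p}_{\mathrm{loc}}$ continuity of the potentials: the operator
$$
\vf\mapsto\int_{\Omega_T}\Big(\int_{\Sigma_3}\vf\,\partial_{\overline{\nu}}G\Big)\,\psi
$$
is bounded on $L^{1}$ for bounded $\psi$, because $\int_{\Omega_T}|\partial_{\overline{\nu}_{y}}G(x-y,t-s)|\,dxdt$ is bounded uniformly in $(y,s)\in\Sigma_3$. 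This integral has a singularity of order $|x-y|^{-(n+1)}$ integrated over an $(n+1)$-dimensional region, which is integrable after using the parabolic scaling. With this uniform bound, Fubini applies directly.
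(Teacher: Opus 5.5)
Your proof is correct. The inclusion $\mathscr{B}^{p}(\Omega_T)\subset\mathscr{A}^{p}(\Omega_T)$ is essentially the paper's argument: extend the integral to a neighbourhood of $\overline{\Omega}_T$, apply Fubini, then use the second identity in \eqref{eq:suppcomp}.

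For $\mathscr{A}^{p}(\Omega_T)\subset\mathscr{B}^{p}(\Omega_T)$ you take a genuinely different route. The paper works pointwise and never uses Fubini:
\begin{itemize}
\item for $(x,t)\notin\overline{\Omega}_T$ it inserts $G(x-\cdot,t-\cdot)$ itself into \eqref{eq:defAp}; the left side vanishes because this function solves the adjoint equation near $\overline{\Omega}_T$;
\item for $(x,t)\in\Omega_T$ it inserts $G(x-\cdot,t-\cdot)$ cut off near the pole. Then $\int_{\Omega_T}u\,H^{*}v$ localizes to a small cylinder around $(x,t)$, where $u$ is smooth, and Green's formula \eqref{eq:Green} together with \eqref{eq:StokesH} evaluates it as $-u(x,t)$.
\end{itemize}
You instead test with the smoothed fundamental solutions $v=\int G(x'-\cdot,t'-\cdot)\,\psi(x',t')\,dx'dt'$, for which $H^{*}v=-\psi$. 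This gives $\int\psi\,W=\int_{\Omega_T}u\,\psi$ for every $\psi$, i.e.\ $W=u\chi_{\Omega_T}$ a.e. You then get pointwise equality from the continuity of both sides on the open sets $\Omega_T$ and $\mathbb{R}^{n+1}\setminus\overline{\Omega}_T$.

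Your route treats interior and exterior points at once and makes the two inclusions exactly dual to each other, with no cutoff and no local Green formula. The price is twofold. First, you need the Fubini justification, which you supply correctly: $\int_{\mathbb{R}^n}|\nabla_z G(z,\tau)|\,dz\le C\tau^{-1/2}$, so $\nabla G$ is locally integrable in $\mathbb{R}^{n+1}$. Second, you need the fact that $W$ is smooth off $\Sigma$. The paper's pointwise argument needs neither, since for fixed $(x,t)\notin\Sigma$ all boundary integrals have bounded kernels.

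Two small corrections.
\begin{itemize}
\item Your ``main obstacle'' paragraph contradicts itself. The uniform bound on $\int|\partial_{\overline{\nu}_{y}}G|$ over a bounded region already shows the kernel is absolutely integrable on $\Sigma_3\times Q$. So Fubini applies directly to $L^{1}$ densities and no density argument is needed. The lack of absolute integrability concerns only the boundary singular integral at points of $\Sigma_3$, which never appears here.
\item In the inclusion $\mathscr{B}^{p}\subset\mathscr{A}^{p}$ you must also check $u\in L^{p}(\Omega_T)$, which is part of the definition of $\mathscr{A}^{p}$. You mention ``$L^{p}\to L^{p}_{\mathrm{loc}}$ continuity'' of the potentials but never use it for this. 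The uniform $L^{1}$ bound you actually prove gives membership only for $p=1$. For $p>1$ you need the $L^{p}$ boundedness of the parabolic potentials, which the paper also quotes from potential theory.
\end{itemize}
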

\begin{proof}
Suppose that $u$ belongs to $\mathscr{A}^{p}(\Omega_T)$. If $(x,t)\notin \overline{\Omega}_{T}$,
the function $G(x-\cdot,t-\cdot)$ is a  solution of the equation $Hu=0$ in $\Omega_{T}$ and $C^{\infty}$ in a  neighborhood of $\overline{\Omega}_{T}$.  Therefore,  formula \eqref{eq:defBp} for $(x,t)\notin \overline{\Omega}_{T}$ follows immediately from
\eqref{eq:defAp} and Remark \ref{rmk:neighb}.

Let now  $(x,t)$ be fixed in $\Omega_{T}$ and
consider a scalar function $\psi\in C^{\infty}(\mathbb{R}^{n+1})$ such that
$\psi(y,s)=0$ if $|y|\leq 1/2$, $|s|\leq 1/2$, and $\psi(y,s)=1$ if $|y|\geq 1$, $|s|\geq 1$.
Take $\eps>0$  such that $\overline{B_{\eps}(x)}\times [t-\eps,t+\eps]\subset \Omega_{T}$ and define
$$
v(y,s)= \psi\left(\frac{y-x}{\eps},  \frac{s-t}{\eps}\right) G(x-y,t-s)\, .
$$

Since $v$ is a $C^{\infty}$ function and coincide with $G(x-y,t-s)$ for $(y,s)$ in a neighborhood of $\partial\Omega_{T}$, by \eqref{eq:defAp} we get
\begin{gather*}
\int_{\Sigma_3}\left(A(y,s)\, \frac{\partial }{\partial\overline{\nu}_{y,s}}G(x-y,t-s) -B(y,s)\, G(x-y,t-s)\right)d\sigma_{y,s} \\
-\int_{\Sigma_1}A(y,T)\, G(x-y,t-T)\,dy +\int_{\Sigma_2}A(y,0)\, G(x-y,t)\, dy 
\\=
\int_{\Sigma_3}\left(A\, \frac{\partial v}{\partial\overline{\nu}} - B\, v \right)d\sigma
                             -\int_{\Sigma_1}A\, v\,d\sigma+\int_{\Sigma_2}A\, v\, d\,\sigma
\\
= - \int_{\Omega_T}u\,  H^{*}v\,dyds = -
 \int_{B_{\eps}(x)\times (t-\eps,t+\eps)}u\,  H^{*}v\,dyds \, .
\end{gather*}
The last equality holds because $H^{*}v(y,s)=H_{(y,s)}^{*}G(x-y,t-s)=0$ for
$|y-x|>\eps$, $|s-t|>\eps$. 

With obvious notations, by Green's formula \eqref{eq:Green}
we can write the last integral as
\begin{gather*}
-\int_{\Sigma^{\eps}_{3}}\left(v\frac{\partial u}{\partial\overline{\nu}}-u\frac{\partial v}{\partial\overline{\nu}}\right)d\sigma -  \int_{\Sigma^{\eps}_{1}}uv\,d\sigma +\int_{\Sigma^{\eps}_{2}}uv\,d\sigma \, .
\end{gather*}

Keeping in mind  \eqref{eq:StokesH}, this is equal to
\begin{gather*}
-\int_{\Sigma^{\eps}_{3}}\left(\frac{\partial u(y,s)}{\partial\overline{\nu}_{y,s}} \, G(x-y,t-s) -u(y,s)\, \frac{\partial }{\partial\overline{\nu}_{y,s}}G(x-y,t-s)\right)d\sigma_{y,s} \\
+\int_{\Sigma^{\eps}_{2}}u(y,t-\eps)\, G(x-y,\eps)\, dy 
=
u(x,t)
\end{gather*}
Note that we can apply Green's formula and \eqref{eq:StokesH}, because the function $u$ is $C^{\infty}$ in
a neighborhood of $B_{\eps}(x)\times (t-\eps,t+\eps)$ and it satisfies the equation $Hu=0$ (see Remark \ref{rmk:smooth}). This establishes formula \eqref{eq:defBp}.

Conversely, if $u\in \mathscr{B}^{p}(\Omega_T)$, known results of potential theory show that $u$ belongs to $L^{p}(\Omega_{T})$. Thanks to Remark \ref{rmk:neighb}, it is enough to prove that \eqref{eq:defAp} holds for any
$v\in \Ctond^{\infty}(A)$, $A$ being a domain such that $\overline{\Omega}_{T}\subset A$. From
\eqref{eq:defBp} we get
\begin{gather*}
 \int_{\Omega_T}u\,  H^{*}v\,dxdt   =  \int_{\Omega_T} H^{*}v(x,t) \\
\times \Bigg(
\int_{\Sigma_3}\left(A(y,s)\, \frac{\partial }{\partial\overline{\nu}_{y,s}}G(x-y,t-s) -B(y,s)\, G(x-y,t-s)\right)d\sigma_{y,s} \\
-\int_{\Sigma_1}A(y,T)\, G(x-y,t-T)\,dy +\int_{\Sigma_2}A(y,0)\, G(x-y,t)\, dy 
\Bigg) dxdt
\\= \int_{A} H^{*}v(x,t) \\
\times \Bigg(
\int_{\Sigma_3}\left(A(y,s)\, \frac{\partial }{\partial\overline{\nu}_{y,s}}G(x-y,t-s) -B(y,s)\, G(x-y,t-s)\right)d\sigma_{y,s} \\
-\int_{\Sigma_1}A(y,T)\, G(x-y,t-T)\,dy +\int_{\Sigma_2}A(y,0)\, G(x-y,t)\, dy 
\Bigg) dxdt
\end{gather*}
Applying Fubini's Theorem and keeping in mind \eqref{eq:suppcomp}, we get 
\eqref{eq:defAp}.

The proof that $\mathscr{A_{*}}^{p}(\Omega_T)=\mathscr{B_{*}}^{p}(\Omega_T)$ is similar.
\end{proof}

The next Theorems show that usual uniqueness results hold in the class 
$\mathscr{A}^{p}(\Omega_T)$ ($\mathscr{A_{*}}^{p}(\Omega_T)$)  for the Dirichlet problem
and for the mixed problem for the equation $Hu=0$ ($H^{*}u=0$).

\begin{teor}
   Let $1<p<\infty$. If $u$ is solution of the Dirichlet problem
   $$
   \begin{cases}
   u\in \mathscr{A}^{p}(\Omega_T) &\\
Hu=0  & \text{in }  \Omega_{T} \\ 
 A =0  & \text{a.e.\ on }  \Sigma_{2}\cup \Sigma_{3}\, , 
\end{cases}
   $$ 
   then $u=0$ in $\Omega_{T}$.
\end{teor}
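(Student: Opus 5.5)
Since $A=0$ on $\Sigma_2\cup\Sigma_3$, I would use Theorem \ref{th:A=B} to get the Stokes representation
\begin{equation*}
u(x,t)=-\int_{\Sigma_3}B(y,s)\,G(x-y,t-s)\,d\sigma_{y,s}-\int_{\Sigma_1}A(y,T)\,G(x-y,t-T)\,dy
\end{equation*}
for $(x,t)\in\Omega_T$. The left side must vanish for $(x,t)\notin\overline{\Omega}_T$. For $t<T$ the $\Sigma_1$ term is zero, because $G$ vanishes for negative time. So for every $(x,t)$ with $t<T$ and $(x,t)\notin\overline{\Omega}_T$ we get
\begin{equation*}
W(x,t):=\int_{\Sigma_3}B(y,s)\,G(x-y,t-s)\,d\sigma_{y,s}=0 .
\end{equation*}
Moreover $u=-W$ in $\Omega_T$. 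The plan is therefore to show $B=0$ a.e.\ on $\Sigma_3$; then $u\equiv0$.

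\textbf{Step 1: exterior region.} Work in the exterior region $(\mathbb{R}^n\setminus\overline{\Omega})\times(0,T)$, where $W=0$. Take internal conormal derivatives, with the exterior side corresponding to the limit $(x,t)\to(x_0,t)^-$. By Theorem \ref{teor:salto_conorm}, the conormal derivative of $W$ from outside is $\frac12 B+K^*B$ a.e.\ on $\Sigma_3$, where $K^*$ is the singular integral in \eqref{eq:formulasalto_conorm}. Since $W$ vanishes identically outside, its nontangential derivatives vanish, and so $\frac12 B+K^*B=0$ a.e.\ on $\Sigma_3$. Invertibility of the operator \eqref{eq:coperator2} with $c=\frac12$ on $L^p(\Sigma_3)$, $1<p<\infty$, then gives $B=0$.

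\textbf{Step 2: consistency with time values.} Values of $W$ at times $t\in(0,T)$ involve only $s<t$, so the restriction $t<T$ causes no trouble: the exterior limits are taken at each fixed time level $t\in(0,T)$. Including $\{t\le0\}$ is harmless, since there $W=0$ trivially.

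The main obstacle is Step 1. Theorem \ref{teor:salto_conorm} is stated for a potential whose nontangential limit of the conormal derivative exists, and we need that the exterior limit of $\partial W/\partial\overline{\nu}_{x_0,t}$ equals $0$. This holds since $W\equiv0$ in an open exterior set, so all its derivatives vanish there and the angular limit is $0$. Care is needed only in matching the sign convention: the exterior side is ``$-$'', which gives the coefficient $+\frac12$, and either sign is fine because $c\neq0$.

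If one prefers to avoid the conormal jump, a fallback uses the continuity of the single layer potential across $\Sigma_3$. Then $W=0$ on $\Sigma_3$, and $-u=W$ solves the Dirichlet problem in $\Omega_T$ with zero lateral and initial data. One would then invoke the interior conormal jump together with the exterior one. Subtracting the two jump relations gives $B$ as the difference of the interior and exterior conormal limits, and the exterior limit is $0$. One still needs the interior limit to vanish, which is harder, so I would go with the invertibility argument above.
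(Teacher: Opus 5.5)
Your proof is correct and is essentially the paper's argument. Via Theorem \ref{th:A=B} you write $u=-\int_{\Sigma_3}B\,G\,d\sigma$ in $\Omega_T$, note that this single layer potential vanishes outside $\overline{\Omega}_T$ for $t<T$, take the exterior conormal limit with \eqref{eq:formulasalto_conorm} to get $\frac12 B+K^{*}B=0$ a.e.\ on $\Sigma_3$, and conclude $B=0$ by invertibility. You cite the invertibility of \eqref{eq:coperator2} from Theorem \ref{teor:salto_conorm}, which is the operator actually needed here (the paper's proof points to Theorem \ref{teor:salto_doppio} at that step), and your fallback paragraph is not needed.
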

\begin{proof}
Thanks to Theorem \ref{th:A=B}, $u$ belongs to $\mathscr{B}^{p}(\Omega_T)$ and then
\begin{gather*}
-\int_{\Sigma_3}B(y,s)\, G(x-y,t-s) d\sigma_{y,s} 
-\int_{\Sigma_1}A(y,T)\, G(x-y,t-T)\,dy  
\\=
\begin{cases}
u(x,t)  & \text{if }  (x,t)\in\Omega_{T}, \\ 
0 & \text{if } (x,t) \notin \overline{\Omega}_{T}.
\end{cases}
\end{gather*}

In particular
\begin{equation}\label{eq:u=iB}
u(x,t) =- \int_{\Sigma_3}B(y,s)\, G(x-y,t-s) d\sigma_{y,s} \, , \quad
(x,t)\in \Omega_{T}\, .
\end{equation}

We have also
$$
 \int_{\Sigma_3}B(y,s)\, G(x-y,t-s) d\sigma_{y,s} = 0 \, , \quad
 (x,t)\notin \overline{\Omega}_{T},\ t<T.
$$
Therefore, for any fixed $(x_{0},t)\in \Sigma_3$, we get
$$
 \int_{\Sigma_3}B(y,s)\, \frac{\partial G(x-y,t-s)}{\partial\overline{\nu}_{x_{0},t}}
 \, d\sigma_{y,s}=0
$$
for any $x\notin \overline{\Omega}$. By the jump formula \eqref{eq:formulasalto_conorm}, we obtain
$$
\frac{1}{2}B(x_0,t)
                                                         +\int_{\Sigma_3}B(y,s)\,\frac{\partial G(x_0-y,t-s)}{\partial\overline{\nu}_{x_{0},t}}\,d\sigma_{y,s} =0
$$
for almost any $(x_{0},t)\in \Sigma_3$. The invertibility of the operator on the left hand side 
(see Theorem \ref{teor:salto_doppio})
implies $B=0$ a.e.. 
The result follows from \eqref{eq:u=iB}.
\end{proof}

\begin{teor}\label{teor:unicitmixed}
   Let $1<p<\infty$. If $u$ is solution of the mixed  problem
   $$
   \begin{cases}
   u\in \mathscr{A}^{p}(\Omega_T) &\\
Hu=0  & \text{in }  \Omega_{T} \\ 
 A =0  & \text{a.e.\ on }  \Sigma_{2}\, , \\
 B=0  & \text{a.e.\ on }  \Sigma_{3}\, ,
\end{cases}
   $$ 
   then $u=0$ in $\Omega_{T}$.
\end{teor}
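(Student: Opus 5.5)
We follow the same path as in the proof of the preceding uniqueness theorem for the Dirichlet problem, with the roles of the two boundary densities interchanged. By Theorem \ref{th:A=B}, $u$ belongs to $\mathscr{B}^{p}(\Omega_T)$. Since $A=0$ on $\Sigma_2$ and $B=0$ on $\Sigma_3$, formula \eqref{eq:defBp} reduces to
\begin{gather*}
\int_{\Sigma_3}A(y,s)\, \frac{\partial }{\partial\overline{\nu}_{y,s}}G(x-y,t-s)\,d\sigma_{y,s}
-\int_{\Sigma_1}A(y,T)\, G(x-y,t-T)\,dy
\\=
\begin{cases}
u(x,t)  & \text{if }  (x,t)\in\Omega_{T}, \\
0 & \text{if } (x,t) \notin \overline{\Omega}_{T}.
\end{cases}
\end{gather*}
For $t<T$ the integral over $\Sigma_1$ vanishes, because $G(\cdot,t-T)=0$ there. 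Hence $u$ coincides in $\Omega_T$ with the double layer potential of density $A|_{\Sigma_3}$:
\begin{equation*}
u(x,t)=\int_{\Sigma_3}A(y,s)\, \frac{\partial G(x-y,t-s)}{\partial\overline{\nu}_{y,s}}\,d\sigma_{y,s},
\qquad (x,t)\in\Omega_T .
\end{equation*}
The same potential vanishes for $(x,t)\notin\overline{\Omega}_T$ with $t<T$, and in particular for $x\in\mathbb{R}^n\setminus\overline{\Omega}$ and $0<t<T$.

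Next, fix $(x_0,t)\in\Sigma_3$ and let $x$ tend to $x_0$ from outside, i.e.\ take the external angular boundary value. By the jump formula \eqref{eq:formulasalto_doppio} of Theorem \ref{teor:salto_doppio}, applied with $\vf=A|_{\Sigma_3}\in L^p(\Sigma_3)$, we obtain
$$
-\frac{1}{2}A(x_0,t)+\int_{\Sigma_3}A(y,s)\,\frac{\partial G(x_0-y,t-s)}{\partial\overline{\nu}_{y,s}}\,d\sigma_{y,s}=0
$$
for almost every $(x_0,t)\in\Sigma_3$. This is the operator \eqref{eq:coperator} with $c=-1/2\neq0$. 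Since $1<p<\infty$, Theorem \ref{teor:salto_doppio} says this operator is invertible on $L^p(\Sigma_3)$, so $A=0$ a.e.\ on $\Sigma_3$. The representation above then gives $u=0$ in $\Omega_T$.

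The only point needing care is the role of the top face $\Sigma_1$. We exclude it by restricting to points with $t<T$. This suffices because the lateral jump relation is needed only at points $(x_0,t)$ with $t\in(0,T)$, and points of $\Omega_T$ satisfy $t<T$ automatically. The remaining ingredients (validity of the angular limits almost everywhere and invertibility of the operator) are taken directly from Theorem \ref{teor:salto_doppio}, so there is no further obstacle.
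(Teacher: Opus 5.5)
Your proposal is correct and follows the paper's proof essentially step by step: you pass to $\mathscr{B}^{p}(\Omega_T)$ via Theorem \ref{th:A=B}, drop the $\Sigma_1$ term for $t<T$ to obtain a double layer representation of $u$ that vanishes outside $\overline{\Omega}_T$, take the exterior angular limit in \eqref{eq:formulasalto_doppio}, and conclude $A=0$ a.e.\ on $\Sigma_3$ from the invertibility of \eqref{eq:coperator} with $c=-1/2$. You attribute that invertibility to Theorem \ref{teor:salto_doppio}, which is the correct reference; the paper's proof cites Theorem \ref{teor:salto_conorm} at this step, evidently a slip.
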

\begin{proof}
Theorem \ref{th:A=B} implies $u\in \mathscr{B}^{p}(\Omega_T)$ and then
\begin{gather*}
\int_{\Sigma_3}A(y,s)\, \frac{\partial }{\partial\overline{\nu}_{y,s}}G(x-y,t-s) \, d\sigma_{y,s} 
-\int_{\Sigma_1}A(y,T)\, G(x-y,t-T)\,dy  
\\=
\begin{cases}
u(x,t)  & \text{if }  (x,t)\in\Omega_{T}, \\ 
0 & \text{if } (x,t) \notin \overline{\Omega}_{T}.
\end{cases}
\end{gather*}

In particular
\begin{equation}\label{eq:u=dentro}
u(x,t)=\int_{\Sigma_3}A(y,s)\, \frac{\partial }{\partial\overline{\nu}_{y,s}}G(x-y,t-s) \, d\sigma_{y,s}
 \, , \quad
(x,t)\in \Omega_{T} 
\end{equation}
and
$$
\int_{\Sigma_3}A(y,s)\, \frac{\partial }{\partial\overline{\nu}_{y,s}}G(x-y,t-s) \, d\sigma_{y,s}
=0  \, , \quad
 (x,t)\notin \overline{\Omega}_{T},\ t<T.
$$
In view of \eqref{eq:formulasalto_doppio}, the last condition implies
$$
-\frac{1}{2}A(x,t) + \int_{\Sigma_3}A(y,s)\, \frac{\partial }{\partial\overline{\nu}_{y,s}}G(x-y,t-s) \, d\sigma_{y,s}
=0
$$
for almost any $(x,t)\in \Sigma_3$. The invertibility of the operator on the left hand side 
(see Theorem \ref{teor:salto_conorm})
implies $A=0$ a.e.. 
The result follows from \eqref{eq:u=dentro}.
\end{proof}

By similar proofs we find
\begin{teor}\label{teor:unicitfirstbvp}
   Let $1<p<\infty$. If $u$ is solution of the Dirichlet problem
   $$
   \begin{cases}
   u\in \mathscr{A}_{*}^{p}(\Omega_T) &\\
H^{*}u=0  & \text{in }  \Omega_{T} \\ 
 A =0  & \text{a.e.\ on }  \Sigma_{1}\cup \Sigma_{3}\, , 
\end{cases}
   $$ 
   then $u=0$ in $\Omega_{T}$.
\end{teor}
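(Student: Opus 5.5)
I would mirror the proof of the Dirichlet uniqueness theorem for $H$, with the roles of $\Sigma_1$ and $\Sigma_2$ exchanged and the variables $(x,t)$, $(y,s)$ swapped. By Theorem \ref{th:A=B}, $u\in\mathscr{B}_{*}^{p}(\Omega_T)$. Since $A=0$ a.e.\ on $\Sigma_1\cup\Sigma_3$, the representation reduces to
$$
-\int_{\Sigma_3}B(x,t)\,G(x-y,t-s)\,d\sigma_{x,t}-\int_{\Sigma_2}A(x,0)\,G(x-y,-s)\,dx
=\begin{cases} u(y,s) & (y,s)\in\Omega_T,\\ 0 & (y,s)\notin\overline{\Omega}_T.\end{cases}
$$
The $\Sigma_2$ term vanishes whenever $s>0$, because $G(\cdot,-s)=0$ there. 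Hence
$$
u(y,s)=-\int_{\Sigma_3}B(x,t)\,G(x-y,t-s)\,d\sigma_{x,t}
$$
for $(y,s)\in\Omega_T$, and the same single layer potential vanishes for $y\notin\overline{\Omega}$, $s>0$.

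Next I would fix $(y_0,s)\in\Sigma_3$ with $0<s<T$. The single layer potential vanishes identically on the exterior region $\{(y,s): y\notin\overline{\Omega}\}$ near $(y_0,s)$. So its conormal derivative with respect to $\overline{\nu}_{y_0,s}$ also vanishes there; this derivative is just a constant-direction derivative in $y$, so differentiating under the integral is legitimate away from $\Sigma_3$. Taking the external angular limit $(y,s)\to(y_0,s)^{-}$ and applying the jump formula \eqref{eq:formulasalto2} of Theorem \ref{teor:salto1} gives
$$
\frac{1}{2}B(y_0,s)+\int_{\Sigma_3}B(x,t)\,\frac{\partial G(x-y_0,t-s)}{\partial\overline{\nu}_{y_0,s}}\,d\sigma_{x,t}=0
$$
for a.e.\ $(y_0,s)\in\Sigma_3$.

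By the last part of Theorem \ref{teor:salto1}, with $c=1/2$ and $1<p<\infty$, this operator is invertible on $L^p(\Sigma_3)$. Therefore $B=0$ a.e., and the representation formula above yields $u=0$ in $\Omega_T$.

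The only delicate point is bookkeeping: the sign in the jump relation \eqref{eq:formulasalto2} (the external limit produces $+\tfrac12$), and the fact that the vanishing for $s>0$ covers all relevant boundary points, since $\Sigma_3=\partial\Omega\times(0,T)$. Everything else is a direct transcription of the earlier proof.
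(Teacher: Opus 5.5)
Your proposal is correct and is exactly the adjoint transcription the paper intends when it says this theorem follows ``by similar proofs'' from the Dirichlet uniqueness result for $H$. You pass to $\mathscr{B}_{*}^{p}$, note the $\Sigma_2$ term drops for $s>0$, use the external jump relation \eqref{eq:formulasalto2} to get $\tfrac12 B+\int_{\Sigma_3}B\,\partial G/\partial\overline{\nu}_{y_0,s}\,d\sigma=0$, and conclude $B=0$ from the invertibility in Theorem \ref{teor:salto1} with $c=1/2$; your sign bookkeeping is right.
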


\begin{teor}
   Let $1<p<\infty$. If $u$ is solution of the mixed problem
   $$
   \begin{cases}
   u\in \mathscr{A}_{*}^{p}(\Omega_T) &\\
H^{*}u=0  & \text{in }  \Omega_{T} \\ 
 A =0  & \text{a.e.\ on }  \Sigma_{1}\, , \\
 B=0  & \text{a.e.\ on }  \Sigma_{3}\, ,
\end{cases}
   $$ 
   then $u=0$ in $\Omega_{T}$.
\end{teor}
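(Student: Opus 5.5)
Since $u\in\mathscr{A}_{*}^{p}(\Omega_T)$, I would first invoke Theorem \ref{th:A=B} to get $u\in\mathscr{B}_{*}^{p}(\Omega_T)$. With $A=0$ a.e.\ on $\Sigma_1$ and $B=0$ a.e.\ on $\Sigma_3$, the representation reduces to
\begin{gather*}
\int_{\Sigma_3}A(x,t)\, \frac{\partial }{\partial\overline{\nu}_{x,t}}G(x-y,t-s)\, d\sigma_{x,t}
-\int_{\Sigma_2}A(x,0)\, G(x-y,-s)\, dx
=
\begin{cases}
u(y,s) & (y,s)\in\Omega_T,\\
0 & (y,s)\notin\overline{\Omega}_T .
\end{cases}
\end{gather*}
The $\Sigma_2$ term vanishes whenever $s>0$, because $G(\cdot,-s)=0$ there. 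Two consequences follow. For $(y,s)\in\Omega_T$, $u$ equals the double layer potential of $A|_{\Sigma_3}$. For $y\notin\overline{\Omega}$ and $0<s<T$, the same double layer potential vanishes.

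Next, fix $(y_0,s)\in\Sigma_3$ with $0<s<T$ and let $(y,s)\to(y_0,s)^{-}$ as an external angular limit. The jump formula \eqref{eq:formulasalto1} of Theorem \ref{teor:salto1} then gives
$$
-\frac12 A(y_0,s)+\int_{\Sigma_3}A(x,t)\,\frac{\partial G(x-y_0,t-s)}{\partial\overline{\nu}_{x,t}}\,d\sigma_{x,t}=0
$$
for a.e.\ $(y_0,s)\in\Sigma_3$. Since $A\in L^p(\Sigma_3)$ with $1<p<\infty$, the invertibility part of Theorem \ref{teor:salto1} (the operator with $\partial/\partial\overline{\nu}_{x,t}$ and $c=-1/2$) forces $A=0$ a.e.\ on $\Sigma_3$. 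Plugging this back into the interior representation gives $u=0$ in $\Omega_T$.

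The main point to watch is that the exterior vanishing is used only at levels $0<s<T$. This is fine because a.e.\ point of $\Sigma_3$ has such $s$, and exterior points $(y,s)$ with $y\notin\overline\Omega$ and the same $s$ lie outside $\overline{\Omega}_T$, so the angular limit is legitimate. The rest mirrors the proof of Theorem \ref{teor:unicitmixed}, with time reversed: here $\Sigma_1$ plays the role that $\Sigma_2$ played there.
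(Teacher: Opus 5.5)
Your proposal is correct and is essentially the paper's argument: the paper proves this statement "by a similar proof" to Theorem \ref{teor:unicitmixed}, namely passing to $\mathscr{B}_{*}^{p}(\Omega_T)$, taking the exterior angular limit of the double layer potential via \eqref{eq:formulasalto1}, and using invertibility of the operator with $c=-\frac12$ to get $A=0$ a.e.\ on $\Sigma_3$, hence $u=0$. Your observation that the $\Sigma_2$ term drops out for $s>0$ is the right time-reversed analogue of the $\Sigma_1$ term dropping out there. You also correctly take the invertibility of the double layer operator from Theorem \ref{teor:salto1}.
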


The next result provides a representation formula for functions belonging to 
 $\mathscr{A}^{p}(\Omega_T)$.

\begin{teor}
Let $1<p<\infty$.
    The function $u$ belongs to $\mathscr{A}^{p}(\Omega_T)$ if and only if there exists
    $\vf\in L^{p}(\Sigma_{2}\cup\Sigma_{3})$ such that
$$
u(x,t)= \int_{\Sigma_{2}\cup\Sigma_{3}}\vf(y,s)\, G(x-y,t-s)\, d\sigma_{y,s}\, ,
\quad (x,t)\in\Omega_{T}\, .
$$
\end{teor}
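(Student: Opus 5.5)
We prove the two implications separately. The ``only if'' part will use Theorem \ref{th:A=B} together with the invertibility results of Section \ref{sec:jump}. The ``if'' part is a Fubini computation of the same kind as the converse part of Theorem \ref{th:A=B}.

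\emph{Sufficiency.} Suppose $u(x,t)=\int_{\Sigma_2\cup\Sigma_3}\vf\, G\,d\sigma$ with $\vf\in L^p(\Sigma_2\cup\Sigma_3)$. We show directly that $u\in\mathscr{B}^p(\Omega_T)$ and then invoke Theorem \ref{th:A=B}.
\begin{itemize}
\item On $\Sigma_2$ we take $A=\vf$. This is admissible because the single layer on $\Sigma_3$ and the $\Sigma_2$-term already have the structure of \eqref{eq:defBp}, with $B=-\vf$ on $\Sigma_3$.
\item The representation \eqref{eq:defBp} also requires $A$ on $\Sigma_3\cup\Sigma_1$, and the function must vanish outside $\overline\Omega_T$. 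So the naive choice $A=0$ on $\Sigma_3\cup\Sigma_1$ fails.
\item It is cleaner to verify \eqref{eq:defAp} directly. Fix $v\in\Ctond^\infty$ of a neighbourhood of $\overline\Omega_T$ and integrate $u\,H^*v$ over $\Omega_T$. Exchanging integrals, we must compute $\int_{\Omega_T}G(x-y,t-s)H^*v(x,t)\,dxdt$ for $(y,s)\in\Sigma_2\cup\Sigma_3$.
\item By \eqref{eq:StokesH*} applied to $v$, and the corresponding boundary limits (Theorems \ref{teor:salto1} and the last theorem of Section \ref{sec:jump}), this equals a combination of $v$ and $\partial v/\partial\overline\nu$ potentials on $\Sigma$ evaluated at boundary points.
\item We then read off $A$ as the trace of $u$: on $\Sigma_3$ it is the single layer itself, which is continuous across $\Sigma_3$; on $\Sigma_1$ it is $u(\cdot,T)$; on $\Sigma_2$ it is $\vf$ by Theorem \ref{th:t->0}. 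We read off $B$ as the interior limit of the conormal derivative given by \eqref{eq:formulasalto_conorm}.
\item These traces lie in $L^p$ by the maximal-function bounds of Fabes--Rivi\`ere, and $u\in L^p(\Omega_T)$ by standard potential estimates.
\end{itemize}

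\emph{Necessity.} Let $u\in\mathscr{A}^p(\Omega_T)=\mathscr{B}^p(\Omega_T)$ with data $A,B$. We seek $\vf$ such that the single layer $w=\int_{\Sigma_2\cup\Sigma_3}\vf G$ equals $u$. By the uniqueness theorem for the Dirichlet problem in $\mathscr{A}^p$, proved above, it suffices that $w\in\mathscr{A}^p(\Omega_T)$, by the sufficiency part, and that $w$ has the same Dirichlet data $A$ as $u$ on $\Sigma_2\cup\Sigma_3$.

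\begin{itemize}
\item \emph{Data on $\Sigma_2$.} Theorem \ref{th:t->0} forces $\vf|_{\Sigma_2}=A|_{\Sigma_2}$.
\item \emph{Data on $\Sigma_3$.} Set $w_2(x,t)=\int_{\Sigma_2}A(y,0)G(x-y,t)\,dy$, whose trace on $\Sigma_3$ lies in $L^p(\Sigma_3)$. We then need $\psi=\vf|_{\Sigma_3}$ solving
$$
\int_{\Sigma_3}\psi(y,s)\,G(x-y,t-s)\,d\sigma_{y,s}=A(x,t)-w_2(x,t)\quad\text{on }\Sigma_3 .
$$
\end{itemize}

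This is a first-kind equation for the single layer operator $S$ on $\Sigma_3$, and solving it is the main obstacle: Section \ref{sec:jump} only supplies invertibility of second-kind operators. I would avoid solving it directly and instead exploit the structure of $u$, which by $\mathscr{B}^p$ is given by \eqref{eq:defBp}.
\begin{itemize}
\item The $\Sigma_2$-term of \eqref{eq:defBp} is already of the required form, and on $\Omega_T$ the $\Sigma_1$-term vanishes.
\item It remains to rewrite the double layer $\int_{\Sigma_3}A\,\partial_{\overline\nu}G$ as a single layer $\int_{\Sigma_3}\psi\, G$ inside $\Omega_T$.
\item To do so, let $W$ be the exterior solution in $(\mathbb{R}^n\setminus\overline\Omega)\times(0,T)$ with Dirichlet datum equal to the interior trace of the double layer.
\item Expanding $W$ by the exterior representation, its conormal jump yields $\psi$. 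The resulting equation for $\psi$ is of the type $\tfrac12\psi+K^*\psi=g$, which is invertible by Theorem \ref{teor:salto_conorm}.
\end{itemize}

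Concretely, I would look for $\psi$ such that the single layer with density $\psi$ has, from the exterior, the same conormal derivative as the function defined as $0$ outside and $u$ inside, suitably combined. Imposing that $w-u$, extended by the exterior values, has zero jump of conormal derivative gives the equation $-\tfrac12\psi+K^*\psi=\text{(known }L^p\text{ datum)}$ via \eqref{eq:formulasalto_conorm}. This is invertible for $1<p<\infty$ by Theorem \ref{teor:salto_conorm}. The uniqueness theorem for the mixed problem then yields $w=u$.
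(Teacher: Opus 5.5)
Once the detours are removed, your argument is the paper's. Sufficiency goes by Fubini, \eqref{eq:StokesH*} and the boundary limits of Section~\ref{sec:jump}. Necessity takes $\vf=A$ on $\Sigma_2$, solves on $\Sigma_3$ the second-kind equation $-\frac12\psi+K^*\psi=B-\partial w_2/\partial\overline{\nu}$, which is exactly \eqref{eq:condB}, and concludes with Theorem~\ref{teor:unicitmixed}. Here $K^*\psi(x,t)=\int_{\Sigma_3}\psi(y,s)\,\partial G(x-y,t-s)/\partial\overline{\nu}_{x,t}\,d\sigma_{y,s}$ and $w_2(x,t)=\int_{\Omega}A(y,0)\,G(x-y,t)\,dy$. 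Discard the Dirichlet-matching attempt and the exterior Dirichlet problem, since nothing available solves a first-kind equation or an exterior problem in $L^p$. Your stated reason for the final equation is also wrong. Across $\Sigma_3$ the conormal derivative of a single layer jumps by its density, so a ``zero jump'' condition for $w-u$ (with $u$ extended by $0$) only gives the pointwise relation $\psi=-B$. The resulting $w$ then misses the double layer of $A|_{\Sigma_3}$. What you need is that the \emph{interior} conormal limit of $w$, given by \eqref{eq:formulasalto_conorm}, equals $B$, so that $w-u$ has zero data for the mixed problem.

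The genuine gap is your claim that all boundary data lie in $L^p$ by the Fabes--Rivi\`ere maximal-function bounds. The paper's proof shares it: it sits in the right-hand side of \eqref{eq:condB} and in the $B$ assigned to the $\Sigma_2$-potential in the sufficiency part. Those bounds concern lateral potentials. The conormal derivative on $\Sigma_3$ of an initial potential is not controlled by the $L^p(\Omega)$ norm of its density. For $A(\cdot,0)\equiv1$ one has $\partial w_2/\partial\overline{\nu}(x,t)=\int_{\partial\Omega}G(x-y,t)\,\langle A\nu(x),\nu(y)\rangle\,d\sigma_y\geq c\,t^{-1/2}$ for $x\in\partial\Omega$ and small $t$, where inside the inner product $A$ is the coefficient matrix. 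So it is not in $L^p(\Sigma_3)$ when $p\geq2$. The density $\mathrm{dist}(y,\partial\Omega)^{-a}$ with $2/p-1\leq a<1/p$ gives growth $t^{-(1+a)/2}$, so this step fails for every $p>1$.

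This is not a technicality. The function $u\equiv1$ lies in $\mathscr{A}^{p}(\Omega_T)$ for every $p$, with $A=1$, $B=0$. Suppose $1=\int_{\Sigma_2\cup\Sigma_3}\vf\,G\,d\sigma$ in $\Omega_T$ with $\vf\in L^p$. The lateral part tends to $0$ as $t\to0^+$ at interior points, so Theorem~\ref{th:t->0} forces $\vf=1$ on $\Sigma_2$. Interior conormal limits (Theorem~\ref{teor:salto_conorm}) then give $\frac12\psi-K^*\psi=\partial w_2/\partial\overline{\nu}$ on $\Sigma_3$, with $\psi=\vf|_{\Sigma_3}$. The left side is in $L^p(\Sigma_3)$, and the right side is not if $p\geq2$. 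So the equivalence as stated is false for $p\geq2$.

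What your argument, and the paper's, does prove is the lateral part: single layers on $\Sigma_3$ with $L^p$ densities belong to $\mathscr{A}^{p}(\Omega_T)$. That is all Theorem~\ref{th:completzausiliare} uses, via Theorem~\ref{th:slpinA*p}. The full statement needs an extra assumption on the initial density, such as $L^p(\Sigma_3)$-integrability of the conormal derivative of its potential.
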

\begin{proof}
\textit{Sufficiency.}  Suppose
\begin{equation}\label{eq:slpS3}
u(x,t)=\int_{\Sigma_{3}}\vf(y,s)\, G(x-y,t-s)\, d\sigma_{y,s}
\end{equation}
with $\vf\in L^{p}(\Sigma_{3})$. For any $v\in C^{\infty}(\mathbb{R}^{n+1})$
we have
\begin{gather*}
\int_{\Omega_T}u\,  H^{*}v\,dxdt  =
\int_{\Sigma_{3}}\vf(y,s)\, d\sigma_{y,s} \int_{\Omega_T}
G(x-y,t-s)\, H^{*}v(x,t)\, dxdt\, .
\end{gather*}
Keeping in mind Theorem \ref{teor:salto1}, representation formula
\eqref{eq:StokesH*} leads to
\begin{gather*}
\int_{\Omega_{T}} G(x-y,t-s) \, H^{*}v(x,t)\, dxdt = -\frac{1}{2}\, v(y,s)
\\
+\int_{\Sigma_3}\left(v(x,t)\, \frac{\partial }{\partial\overline{\nu}_{x,t}}G(x-y,t-s) - \frac{\partial v(x,t)}{\partial\overline{\nu}_{x,t}}\, G(x-y,t-s)\right)d\sigma_{x,t} \\
+\int_{\Sigma_1}v(x,T)\, G(x-y,T-s)\, dx 
\end{gather*}
for almost every $(y,s)\in \Sigma_{3}$. Then
\begin{gather*}
\int_{\Omega_T}u\,  H^{*}v\,dxdt  = \int_{\Sigma_{3}}\vf(y,s) \Big(  -\frac{1}{2}\, v(y,s)\\
+\int_{\Sigma_3}\left(v(x,t)\, \frac{\partial }{\partial\overline{\nu}_{x,t}}G(x-y,t-s) - \frac{\partial v(x,t)}{\partial\overline{\nu}_{x,t}}\, G(x-y,t-s)\right)d\sigma_{x,t} \\
+\int_{\Sigma_1}v(x,T)\, G(x-y,T-s)\, dx  \Big) d\sigma_{y,s} \\
= \int_{\Sigma_3}\left(B\, v -A\, \frac{\partial v}{\partial\overline{\nu}}\right)d\sigma
                             +\int_{\Sigma_1}A\, v\,d\sigma
\end{gather*}
where
\begin{gather*}
B(x,t)= -\frac{1}{2}\, \vf(x,t) + \int_{\Sigma_{3}}\vf(y,s)\,  \frac{\partial }{\partial\overline{\nu}_{x,t}}G(x-y,t-s)\, 
d\sigma_{y,s}\, ,\  (x,t)\in \Sigma_{3}\, \\
A(x,t)=  \int_{\Sigma_{3}} \vf(y,s)\, G(x-y,t-s)\, d\sigma_{y,s}\, ,\ (x,t)\in\ \Sigma_{1}\cup \Sigma_{3}\, .
\end{gather*}
This proves that the potential \eqref{eq:slpS3} belongs to $\mathscr{A}^{p}(\Omega_T)$.

Likewise, if
$$
u(x,t)= \int_{\Sigma_{2}}\vf(y,0)\, G(x-y,t)\, dy\, ,
$$
keeping in mind also Theorem \ref{th:t->0}, we find that \eqref{eq:defAp} holds with
\begin{gather*}
B(x,t)= \int_{\Sigma_{2}}\vf(y,0)\,  \frac{\partial }{\partial\overline{\nu}_{x,t}}G(x-y,t)\, 
dy\, ,  \ (x,t)\in \Sigma_{3}\,\\
A(x,t)=\int_{\Sigma_{2}}\vf(y,0)\,  G(x-y,t)\, 
dy\, , \ (x,t)\in\ \Sigma_{1}\cup \Sigma_{3}\, ,\\
A(x,0)= \vf(x,0)\, , \ (x,0)\in \Sigma_{2}\, .
\end{gather*}
This completes the proof of the sufficiency.

\textit{Necessity.} Let $u\in \mathscr{A}^{p}(\Omega_T)$.   Let us consider the simple layer potential
$$
v(x,t)= \int_{\Sigma_{2}\cup\Sigma_{3}}\vf(y,s)\, G(x-y,t-s)\, d\sigma_{y,s}
$$
where
\begin{equation}\label{eq:condA}
\vf(x,0)=A(x,0), \quad x\in\Omega
\end{equation}
and $\vf$ on $\Sigma_{3}$ is the solution of the integral equation
\begin{equation}\label{eq:condB}
\begin{gathered}
-\frac{1}{2}\vf(x,t) + \int_{\Sigma_{3}}\vf(y,s)\,  \frac{\partial }{\partial\overline{\nu}_{x,t}}G(x-y,t-s)\, d\sigma_{y,s}\\
= - \int_{\Sigma_{2}}A(y,0)\,  \frac{\partial }{\partial\overline{\nu}_{x,t}}G(x-y,t)\, dy + B(x,t)\, ,
\quad (x,t)\in \Sigma_{3}.
\end{gathered}
\end{equation}
Here $A$ and $B$ are the ones in \eqref{eq:defAp} (or equivalently, in \eqref{eq:defBp}).
Note that the integral equation \eqref{eq:condB} is solvable, because of the invertibility of
operator \eqref{eq:coperator2}.

By the Sufficiency part of the proof, we know that $v\in \mathscr{A}^{p}(\Omega_T)$. Then,
the function $w=u-v$ belongs to $\mathscr{A}^{p}(\Omega_T)$.  Conditions \eqref{eq:condA}
and \eqref{eq:condB} show that $w=0$ a.e.\ on   $\Sigma_{2}$ and 
 $\partial w/\partial\overline{\nu}=0$  a.e.\ on $\Sigma_{3}$. The result follows from
 the uniqueness theorem \ref{teor:unicitmixed}.
\end{proof}

By a similar proof, we get
\begin{teor}\label{th:slpinA*p}
Let $1<p<\infty$.
    The function $u$ belongs to $\mathscr{A}_{*}^{p}(\Omega_T)$ if and only if there exists
    $\vf\in L^{p}(\Sigma_{1}\cup\Sigma_{3})$ such that
$$
u(y,s)= \int_{\Sigma_{1}\cup\Sigma_{3}}\vf(x,t)\, G(x-y,t-s)\, d\sigma_{x,t}\, ,
\quad (y,s)\in\Omega_{T}\, .
$$
\end{teor}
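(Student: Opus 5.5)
The plan is to mirror the proof of the preceding representation theorem for $\mathscr{A}^{p}(\Omega_T)$, interchanging the roles of $(x,t)$ and $(y,s)$, of $H$ and $H^{*}$, and of $\Sigma_1$ and $\Sigma_2$.

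\emph{Sufficiency.} First take $u(y,s)=\int_{\Sigma_3}\vf(x,t)\,G(x-y,t-s)\,d\sigma_{x,t}$ with $\vf\in L^{p}(\Sigma_3)$. For $v\in C^{\infty}(\mathbb{R}^{n+1})$, Fubini gives
\[
\int_{\Omega_T}u\,Hv\,dy\,ds=\int_{\Sigma_3}\vf(x,t)\,d\sigma_{x,t}\int_{\Omega_T}G(x-y,t-s)\,Hv(y,s)\,dy\,ds .
\]
I then compute the inner integral for $(x,t)\in\Sigma_3$. To do so I take the Stokes formula \eqref{eq:StokesH} at interior points (value $v(x,t)$) and at exterior points (value $0$), and let $(x,t)$ tend to the lateral boundary. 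The double layer term jumps according to Theorem~\ref{teor:salto_doppio}, and either side limit gives the value $\frac12 v(x,t)$ at almost every boundary point. This yields
\[
-\int_{\Omega_T}G\,Hv=\tfrac12 v(x,t)-\int_{\Sigma_3}\Bigl(v\,\partial_{\overline{\nu}_{y,s}}G-\partial_{\overline{\nu}}v\,G\Bigr)d\sigma_{y,s}-\int_{\Sigma_2}v(y,0)\,G(x-y,t)\,dy ,
\]
where the $\Sigma_1$ term is absent because $t<T$.

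Substituting back and using Fubini again, I obtain \eqref{eq:defA*p} with the following data:
\[
B(y,s)=-\tfrac12\vf(y,s)+\int_{\Sigma_3}\vf(x,t)\,\partial_{\overline{\nu}_{y,s}}G(x-y,t-s)\,d\sigma_{x,t},\qquad A=u \text{ on } \Sigma_2\cup\Sigma_3 .
\]
The sign of the $\frac12\vf$ term has to be checked against \eqref{eq:formulasalto2}. Both $A$ and $B$ lie in $L^p$ by the boundedness of the singular integrals and of the single layer trace from the Fabes--Rivi\`ere theory.

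Next take the $\Sigma_1$ potential $u(y,s)=\int_\Omega\vf(x,T)\,G(x-y,T-s)\,dx$. The same computation, now using the last theorem of Section~\ref{sec:jump} (the limit $s\to T^{-}$), shows that $u\in\mathscr{A}_*^p(\Omega_T)$ with $A(y,T)=\vf(y,T)$ on $\Sigma_1$. On $\Sigma_2\cup\Sigma_3$, $A$ is the trace of the potential, and $B$ is its conormal derivative in $y$.

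\emph{Necessity.} Given $u\in\mathscr{A}_*^p(\Omega_T)$ with data $A,B$, I set $\vf(x,T)=A(x,T)$ on $\Sigma_1$. On $\Sigma_3$ I let $\vf$ solve
\[
-\tfrac12\vf(y,s)+\int_{\Sigma_3}\vf(x,t)\,\partial_{\overline{\nu}_{y,s}}G\,d\sigma_{x,t}=B(y,s)-\int_{\Omega}A(x,T)\,\partial_{\overline{\nu}_{y,s}}G(x-y,T-s)\,dx .
\]
This equation is solvable by the invertibility statement in Theorem~\ref{teor:salto1} with $c=-\frac12$. Let $v$ be the corresponding single layer on $\Sigma_1\cup\Sigma_3$. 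By sufficiency $v\in\mathscr{A}_*^p(\Omega_T)$, so $w=u-v\in\mathscr{A}_*^p(\Omega_T)$ with data $A=0$ on $\Sigma_1$ and $B=0$ on $\Sigma_3$. The mixed uniqueness theorem for $H^*$ stated just before then gives $w=0$.

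The main obstacle is the sufficiency step: justifying the passage to the boundary in the Stokes formula for smooth $v$, and the Fubini interchanges with the singular kernels, while tracking the correct signs and the conormal direction in the jump relations. One point to verify explicitly is that the data $A,B$ are indeed in $L^{p}$. This also needs the traces of the $\Sigma_1$ potential on $\Sigma_3$ and on $\Sigma_2$ to be $L^p$. Since $G(x-y,T-s)$ is smooth near $s=0$, the trace on $\Sigma_2$ is harmless. Near the corner $\Sigma_3\cap\{t=T\}$ the trace on $\Sigma_3$ is controlled by standard estimates for the heat-type potential.
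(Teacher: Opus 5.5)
Your argument is exactly the mirror of the preceding representation theorem that the paper intends (it only says ``by a similar proof''), and the bookkeeping is right. The $\Sigma_3$-datum $B=-\frac12\vf+\int_{\Sigma_3}\vf\,\partial_{\overline{\nu}_{y,s}}G\,d\sigma_{x,t}$ agrees with \eqref{eq:formulasalto2}. Your integral equation is the analogue of \eqref{eq:condB}, solvability comes from Theorem~\ref{teor:salto1} with $c=-\frac12$, and the mixed uniqueness theorem for $H^*$ closes the argument. Your sufficiency argument for densities supported on $\Sigma_3$ is fine, and that is the only part used later, in Theorem~\ref{th:completzausiliare}.

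\textbf{The gap.} You list the $L^p$ membership of the data as ``to be verified'' and then check only the traces. The missing item is the conormal derivative of the $\Sigma_1$-potential, and it cannot be supplied. For $u(y,s)=\int_\Omega\vf(x,T)\,G(x-y,T-s)\,dx$ the datum on $\Sigma_3$ is $B(y,s)=\int_\Omega\vf(x,T)\,\partial_{\overline{\nu}_{y,s}}G(x-y,T-s)\,dx$. Take $\vf\equiv1$. Integrating by parts in $x$ gives $B(y,s)=\int_{\partial\Omega}G(x-y,T-s)\langle A\nu(y),\nu(x)\rangle\,d\sigma_x$, with $A$ the coefficient matrix. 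This behaves like $\langle A\nu(y),\nu(y)\rangle^{1/2}(4\pi(T-s))^{-1/2}$ as $s\to T^-$, so it is not $p$-integrable for $p\ge 2$. This $u$ is smooth up to $\Sigma_3\cap\{s<T\}$, so for smooth $\Omega$ testing \eqref{eq:defA*p} with $v$ localized near such points forces $B$ to be this function. Hence $u\notin\mathscr{A}^p_*(\Omega_T)$. A scaling argument with densities $\eps^{-1/p}$ on an $\eps$-layer along $\partial\Omega$ shows more: $\vf\mapsto B$ is unbounded from $L^p(\Sigma_1)$ to $L^p(\Sigma_3)$ for every $p>1$.

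\textbf{Consequence for necessity.} The same defect breaks your necessity step. The right-hand side $B-\int_\Omega A(x,T)\,\partial_{\overline{\nu}_{y,s}}G(x-y,T-s)\,dx$ of your integral equation need not lie in $L^p(\Sigma_3)$, so the invertibility in Theorem~\ref{teor:salto1} cannot be invoked. Concretely, $u\equiv 1$ belongs to $\mathscr{A}^p_*(\Omega_T)$ with $A=1$ on $\Sigma$ and $B=0$. For $p\ge 2$ it has no representation of the required form:
\begin{itemize}
\item letting $s\to T^-$ (last theorem of Section~\ref{sec:jump}) forces the $\Sigma_1$-density to be $1$;
\item the $\Sigma_3$-single layer would then have to equal $1-\int_\Omega G(x-y,T-s)\,dx$, whose conormal derivative is $\sim -c\,(T-s)^{-1/2}$ with $c>0$;
\item but single layers with densities in $L^p(\Sigma_3)$ have conormal derivatives in $L^p(\Sigma_3)$, by \eqref{eq:formulasalto2} and the Fabes--Rivi\`ere bounds.
\end{itemize}
So this is not a matter of missing estimates. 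The statement as written fails, at least for $p\ge2$ in both directions, and the paper's ``similar proof'' has the same hole. So does its proof of the $\mathscr{A}^p$ analogue, where the $\Sigma_2$-potential has the same $t^{-1/2}$ singularity at $\partial\Omega\times\{0\}$. A true statement requires changing the setting, for instance restricting to densities on $\Sigma_3$.
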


\section{Parabolic polynomials}\label{sec:polynom}

We define the parabolic polynomials $v_{\alpha}(x,t)$ as the coefficient of $\xi^{\alpha}/\alpha!$ in the following power series expansion
$$
\exp[\langle x,\xi\rangle + t\, a_{hk}\xi_h \xi_k] = 
\sum_{|\alpha|=0}^{\infty} v_{\alpha}(x,t) \frac{\xi^{\alpha}}{\alpha!}\, .
$$
Here we adopt the usual conventions for a multiindex $\alpha=(\alpha_1,\ldots, \alpha_n)$
$$
|\alpha | = \alpha_1 +\cdots + \alpha_n, \ \alpha! = \alpha_1! \cdots \alpha_n!, \ \xi^{\alpha} =
\xi_{1}^{\alpha_1} \cdots \xi_{n}^{\alpha_n}\, .
$$

In other words
$$
v_{\alpha}(x,t) = D^{\alpha}_{\xi} \left( \exp[\langle x,\xi\rangle + t\, a_{hk}\xi_h \xi_k]\right) \Big|_{\xi=0}\, .
$$

The polynomial $v_{\alpha}(x,t)$ satisfies the equation $Hv_{\alpha}=0$ and it can be represented  as
\begin{equation}\label{eq:integrrepr}
v_{\alpha}(x,t) = \int_{\mathbb{R}^n}G(x-y,t)\, y^{\alpha} dy 
\end{equation}
for any $x\in \mathbb{R}^n$ and $t>0$. This is a consequence of the fact that, for $t>0$, we have
\begin{gather*}
\sum_{|\alpha|=0}^{\infty}  \frac{\xi^{\alpha}}{\alpha!} \int_{\mathbb{R}^n} G(x-y,t)\, y^{\alpha} dy =
\int_{\mathbb{R}^n} G(x-y,t) \sum_{|\alpha|=0}^{\infty}  \frac{y^{\alpha}\xi^{\alpha}}{\alpha!} dy \\
=
\int_{\mathbb{R}^n} G(x-y,t)  e^{\langle y,\xi \rangle}dy
= \exp[\langle x,\xi\rangle + t\, a_{hk}\xi_h \xi_k] \, .
\end{gather*}

From \eqref{eq:integrrepr} we  see that $v_{\alpha}$ is of degree $\alpha_j$ in $x_j$ and
\begin{equation}\label{eq:t=0}
v_{\alpha}(x,0)= x^{\alpha}, \qquad \forall\ x\in \mathbb{R}^n.
\end{equation}

The next lemma shows that the linear span of the system $\{v_{\alpha}\}$ 
is given by all the polynomials $p$ satisfying the equation $Hp=0$.

\begin{lem}\label{lem:1}
    Let $p(x,t)$ a polynomial such that $Hp=0$. There exist a non negative integer $m$ and real constants
    $c_{\alpha}$ ($|\alpha|\leq m$) such that
    \begin{equation}\label{eq:span}
p(x,t) = \sum_{|\alpha|\leq m} c_{\alpha} v_{\alpha}(x,t), \quad \forall\ (x,t)\in \mathbb{R}^{n+1}.
\end{equation}
\end{lem}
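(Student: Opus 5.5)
The natural approach is a uniqueness argument for the Cauchy problem in the class of polynomials. Given the polynomial $p$ with $Hp=0$, I expand its initial trace in monomials: write
$$
p(x,0)=\sum_{|\alpha|\leq m} c_{\alpha}\, x^{\alpha},
$$
where $m$ is the degree of $p(\cdot,0)$ in $x$. Then I set
$$
q(x,t)=p(x,t)-\sum_{|\alpha|\leq m} c_{\alpha} v_{\alpha}(x,t).
$$
By \eqref{eq:t=0} we have $q(x,0)=0$ for every $x\in\mathbb{R}^n$. Since each $v_\alpha$ satisfies $Hv_{\alpha}=0$, also $Hq=0$, and $q$ is a polynomial. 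It remains to show that $q\equiv 0$, which gives \eqref{eq:span}.

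To prove $q\equiv0$ I would avoid any uniqueness theorem for the Cauchy problem in growth classes and argue algebraically. Write
$$
q(x,t)=\sum_{j=0}^{N} t^{j} q_j(x),
$$
with polynomials $q_j$ in $x$. The equation $Eq=\partial_t q$ becomes, after comparing coefficients of $t^{j}$,
$$
(j+1)\,q_{j+1}=E q_j, \qquad j=0,\dots,N-1,
$$
together with $Eq_N=0$. Since $q_0=q(\cdot,0)=0$, induction on $j$ gives $q_{j+1}=Eq_j/(j+1)=0$ for all $j$. Hence $q\equiv0$ on $\mathbb{R}^{n+1}$.

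The same recursion also clarifies the structure: every solution polynomial is determined by its initial trace through $p=\sum_j t^j E^j p(\cdot,0)/j!$, a finite sum since $E$ lowers degree by two. One can check directly that $v_\alpha=\sum_j t^jE^jx^\alpha/j!$ from the generating function, which confirms consistency with \eqref{eq:t=0}.

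There is no real obstacle here. The only point needing care is that the coefficient comparison is legitimate, which holds because both sides are polynomials in $t$ with polynomial coefficients in $x$. The finiteness of the sum in \eqref{eq:span} comes from $m$ being the $x$-degree of $p(\cdot,0)$.
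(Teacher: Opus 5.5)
Your proof is correct. The reduction to $q=p-\sum_{|\alpha|\le m}c_\alpha v_\alpha$ with $q(\cdot,0)=0$ is the same as in the paper; the two arguments differ only in how they show $q\equiv 0$.

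The paper writes $q(x,t)=\int_{\mathbb{R}^n}G(x-y,t)\,q(y,0)\,dy$ for $t>0$. This tacitly relies on uniqueness for the Cauchy problem for $H$ among solutions of polynomial growth. It then gets $q=0$ for $t>0$ and passes to $t\le 0$ by analyticity of the polynomial $q$.

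You instead compare coefficients of $t^j$ in $Eq=\partial_t q$, obtaining $(j+1)q_{j+1}=Eq_j$, and conclude $q_j=0$ by induction from $q_0=0$. This is purely algebraic: it works on all of $\mathbb{R}^{n+1}$ at once and needs neither a growth-class uniqueness theorem nor a continuation step. It also yields the explicit formula $p=\sum_j t^jE^jp(\cdot,0)/j!$, so $p\mapsto p(\cdot,0)$ is a bijection from polynomial solutions of $Hp=0$ onto polynomials in $x$. Your identity $v_\alpha=\sum_j t^jE^jx^\alpha/j!$ is also right, since $E\,e^{\langle x,\xi\rangle}=a_{hk}\xi_h\xi_k\,e^{\langle x,\xi\rangle}$.

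The paper's route leans on the heat-kernel representation \eqref{eq:integrrepr} used throughout, and its forward-in-time part would also cover non-polynomial solutions of controlled growth. For the lemma as stated, your argument is the more elementary and self-contained one.
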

\begin{proof}
Since $p(x,0)$ is a polynomial of a certain degree $m$, there exist real constants
    $c_{\alpha}$ such that
    $$
    p(x,0)=  \sum_{|\alpha|\leq m} c_{\alpha} x^{\alpha}, \quad  \forall\ x\in \mathbb{R}^{n}.
    $$
    
    Let $q$ be the polynomial
    $$
    q(x,t)= p(x,t) -  \sum_{|\alpha|\leq m} c_{\alpha} v_{\alpha}(x,t)\, .
    $$
    
 We have $Hq=0$ and then
 $$
 q(x,t) = \int_{\mathbb{R}^n}G(x-y,t)\, q(y,0)\, dy , \quad x\in\mathbb{R}^n,\, t>0.
 $$
  On the other hand \eqref{eq:t=0} implies $q(y,0)=0$ for any $y\in \mathbb{R}^n$. Therefore
  $q(x,t)=0$ for any $x\in\mathbb{R}^n,\, t>0$, and \eqref{eq:span} follows
    by analyticity.    
\end{proof}

In a similar way we may define the polynomials
$$
w_{\alpha}(x,t) = D^{\alpha}_{\xi} \left( \exp[\langle x,\xi\rangle - t\, a_{hk}\xi_h \xi_k]\right) \Big|_{\xi=0}\, .
$$
They satisfy the equation $H^{*}w_{\alpha}=0$ and, as in Lemma \ref{lem:1}, we have
\begin{lem}
    Let $q(x,t)$ a polynomial such that $H^{*}q=0$. There exist a non negative integer $m$ and real constants
    $c_{\alpha}$ ($|\alpha|\leq m$) such that
    \begin{equation*}
q(x,t) = \sum_{|\alpha|\leq m} c_{\alpha} w_{\alpha}(x,t), \quad \forall\ (x,t)\in \mathbb{R}^{n+1}.
\end{equation*}
\end{lem}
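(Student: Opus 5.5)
The plan is to mirror the proof of Lemma \ref{lem:1}, reversing the direction of time. The point is that $H^{*}$ is just $H$ after the change of variable $t\mapsto -t$. More precisely, if $q$ is a polynomial with $H^{*}q=0$, set
$$
p(x,t)=q(x,-t).
$$
Then $Ep(x,t)=(Eq)(x,-t)$ and $\partial_t p(x,t)=-(\partial_t q)(x,-t)$, so
$$
Hp(x,t)=(Eq+\partial_t q)(x,-t)=(H^{*}q)(x,-t)=0 .
$$
Also, directly from the generating functions,
$$
w_{\alpha}(x,t)=D^{\alpha}_{\xi}\exp[\langle x,\xi\rangle-t\,a_{hk}\xi_h\xi_k]\big|_{\xi=0}=v_{\alpha}(x,-t).
$$

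We then apply Lemma \ref{lem:1} to $p$. It gives an integer $m$ and constants $c_\alpha$ with $p(x,t)=\sum_{|\alpha|\le m}c_\alpha v_\alpha(x,t)$ for all $(x,t)\in\mathbb{R}^{n+1}$. Evaluating this identity at $(x,-t)$ yields
$$
q(x,t)=\sum_{|\alpha|\le m}c_\alpha w_\alpha(x,t)
$$
for every $(x,t)\in\mathbb{R}^{n+1}$, which is the claim.

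One can also argue directly, without the reflection. Write $q(x,0)=\sum c_\alpha x^\alpha$ and note that $w_\alpha(x,0)=x^\alpha$. The difference $r=q-\sum c_\alpha w_\alpha$ is a polynomial with $H^{*}r=0$ and $r(\cdot,0)=0$. Then $r(x,-t)$ solves $Hu=0$ with zero initial data, so it vanishes for $t>0$ by the Poisson-type representation used in Lemma \ref{lem:1}. Hence $r$ vanishes for $t<0$, and therefore everywhere by analyticity.

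There is no real obstacle. The only points that need checking are the sign bookkeeping in $H(q(x,-t))$ and the identity $w_\alpha(x,t)=v_\alpha(x,-t)$, and both are immediate.
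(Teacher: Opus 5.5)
Your proof is correct and is essentially the paper's own: the paper only says the lemma holds ``as in Lemma~\ref{lem:1}'', and your reflection $p(x,t)=q(x,-t)$, with $Hp(x,t)=(H^{*}q)(x,-t)$ and $w_{\alpha}(x,t)=v_{\alpha}(x,-t)$, is a clean way to reduce directly to that lemma. Your second, direct argument is the proof of Lemma~\ref{lem:1} repeated for $H^{*}$, which is what the paper's ``as in Lemma~\ref{lem:1}'' refers to.
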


\section{Completeness theorems}\label{sec:compl}

Before proving our main result we need a completeness result for a particular class of potentials.

Let  $B_{r}$ be the open ball defined by $|x|<r$.  Let us consider the
domain $B_{r}\times(-T^{*},T^{*})$, where $T^{*}>T$ and $r$ is such that 
$\overline{\Omega}\subset B_{r}$.  Denote by $\mathscr{S}$ the class of potentials
$$
\Phi(x,t)=\int_{\Sigma^{*}_{2}\cup \Sigma^{*}_{3}}\vf(y,s)G(x-y,t-s)\,d\sigma_{y,s}
$$
with $\vf$ varying in $C^{0}(\Sigma^{*}_{2}\cup \Sigma^{*}_{3})$, where $\Sigma^{*}_{2}=\{(x,-T^{*})\,\,|\,\,x\in B_{r}\}$, $\Sigma^{*}_{3}=\partial B_{r}\times 
(-T^{*},T^{*})$\, .

\begin{teor}
\label{th:completzausiliare}
Let $1\leq p<\infty$. 
The system 
$\left\{\Phi\big|_{\Sigma_{2}\cup\Sigma_{3}} \, \middle| \,  \Phi\in \mathscr{S}\right\}$
is complete in $L^{p}(\Sigma_{2}\cup\Sigma_{3})$.
\end{teor}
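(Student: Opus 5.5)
We argue by duality (Hahn--Banach). Suppose $g\in L^{q}(\Sigma_{2}\cup\Sigma_{3})$, $1/p+1/q=1$ (with $q=\infty$ if $p=1$), is orthogonal to every restriction:
$$
\int_{\Sigma_{2}\cup\Sigma_{3}} g(x,t)\,\Phi(x,t)\,d\sigma_{x,t}=0 \qquad \forall\,\Phi\in\mathscr{S}.
$$
We must show $g=0$ a.e. Since $g\in L^{q}$ and $\Sigma_2\cup\Sigma_3$ is bounded, $g\in L^{r}$ for every finite $r\le q$; we fix some $1<r<\infty$ with $r\le q$, so that the results of Sections \ref{sec:jump}--\ref{sec:AeA*} apply. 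We then introduce the adjoint single layer potential
$$
w(y,s)=\int_{\Sigma_{2}\cup\Sigma_{3}} g(x,t)\,G(x-y,t-s)\,d\sigma_{x,t},
$$
which solves $H^{*}w=0$ off $\Sigma_2\cup\Sigma_3$.

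Next we insert the definition of $\Phi$ and apply Fubini. Because the densities $\vf\in C^{0}(\Sigma_2^*\cup\Sigma_3^*)$ are arbitrary and $\overline{\Omega}_T$ lies at positive distance from $\Sigma^*_2\cup\Sigma^*_3$ (the kernel is then smooth), the orthogonality becomes $w=0$ on $\Sigma^{*}_{2}\cup\Sigma^{*}_{3}$. Now $w$ vanishes for $s>T$, since $G(\cdot,t-s)=0$ when $t\le T<s$. Consider $D=(B_r\times(-T^*,T^*))\setminus\overline{\Omega}_T$. There $H^*w=0$, $w$ is smooth up to $\partial B_r\times(-T^*,T^*)$ and up to $\{s=-T^*\}$, and $w=0$ on $\Sigma_3^*$ and on $\Sigma_2^*$. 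The issue is that for the backward operator $H^*$ the Cauchy data belong at the top, $s=T^*$, where $w=0$ anyway. So the conditions $w=0$ on $\Sigma_3^*$ and $w=0$ for $s\ge T$ give, by uniqueness for the adjoint Dirichlet problem on the region between the outer boundary and the time $T$, that $w=0$ in $(B_r\setminus\overline{\Omega})\times(-T^*,T^*)$. On the slab $B_r\times(-T^*,0)$ the same argument applies: $w=0$ at the lateral boundary and at $s=0$ from above (once the exterior region is handled) forces $w=0$ there. Actually it is cleaner to observe that $w$ is real-analytic in $x$ in the exterior and extends to a solution in the connected set $(\mathbb{R}^n\setminus\overline\Omega)\times\mathbb{R}\cup\{s<0\}$. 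The decay of $w$ as $|y|\to\infty$, combined with $w=0$ for $s>T$ and uniqueness of the backward Cauchy problem in the class of bounded solutions, gives $w\equiv 0$ outside $\overline{\Omega}_T$. Here the connectedness of $\mathbb{R}^n\setminus\overline{\Omega}$ is used.

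From $w=0$ outside $\overline{\Omega}_T$ we extract $g$. For $s<0$, $w(y,s)=0$ and the jump at $\Sigma_2$ (the adjoint analogue of Theorem \ref{th:t->0}, as $s\to0^-$) shows only that $w$ is continuous across $\Sigma_3$ in the angular sense. Taking exterior limits of the conormal derivative, the second jump formula \eqref{eq:formulasalto2} gives
$$
\tfrac12 g+\int_{\Sigma_3}g\,\partial_{\overline\nu_{y}}G\,d\sigma+(\text{term from }\Sigma_2)=0 \quad\text{on }\Sigma_3.
$$
The key trick is then this: the interior restriction $w|_{\Omega_T}$ belongs to $\mathscr{A}^{r}_*(\Omega_T)$ by Theorem \ref{th:slpinA*p}, with Dirichlet datum $A=w|_{\Sigma_1\cup\Sigma_3}$. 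Since $w$ is continuous across $\Sigma_3$ and vanishes for $s\ge T$, both the datum on $\Sigma_3$ and the datum on $\Sigma_1$ vanish. Theorem \ref{teor:unicitfirstbvp} then yields $w=0$ in $\Omega_T$. Thus $w\equiv0$ on both sides of $\Sigma_3$, and the difference of the interior and exterior conormal jumps \eqref{eq:formulasalto2} gives $g=0$ on $\Sigma_3$. Finally, with $g|_{\Sigma_3}=0$, $w(y,s)=\int_\Omega g(x,0)G(x-y,-s)\,dx$ vanishes for $s<0$. Letting $s\to0^-$ and using the adjoint analogue of Theorem \ref{th:t->0}, we get $g(\cdot,0)=0$ a.e. on $\Sigma_2$.

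The main obstacle is the step propagating $w=0$ from $\Sigma^*_2\cup\Sigma^*_3$ to the whole exterior of $\overline{\Omega}_T$. Parabolic uniqueness is one-directional in time, so one must arrange the region and the data, namely $w=0$ for $s>T$ together with the lateral condition on $\partial B_r$, in the direction compatible with $H^*$. I would first try a direct energy or maximum-principle uniqueness argument on $(B_r\setminus\overline\Omega)\times(-T^*,T)$, using final data $0$ at $s=T$ and lateral data $0$ on $\partial B_r$. The lateral data on $\partial\Omega$ are unknown, however, so this fails, and I would then fall back on analytic continuation in $x$ through the unbounded exterior.
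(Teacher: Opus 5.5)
Your opening steps (Hahn--Banach, Fubini, $w=0$ on $\Sigma_2^*\cup\Sigma_3^*$) and your closing steps ($w|_{\Omega_T}\in\mathscr{A}_*^r(\Omega_T)$, Theorem~\ref{teor:unicitfirstbvp}, the conormal jump \eqref{eq:formulasalto2}, then $s\to0^-$) coincide with the paper's proof. The middle step, which you yourself flag as the main obstacle, is not established, and the argument you fall back on is wrong. Your inference runs: decay at infinity, $w=0$ for $s>T$, uniqueness for the bounded backward Cauchy problem and analytic continuation in $x$ give $w\equiv0$ outside $\overline\Omega_T$. It never uses $w=0$ on $\Sigma_2^*\cup\Sigma_3^*$, and it is false for general single layer potentials. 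Take $g=0$ on $\Sigma_3$ and $g(\cdot,0)\geq0$ continuous, compactly supported in $\Omega$, not identically zero. Then $w$ is bounded, decays, solves $H^*w=0$ off $\overline\Omega_T$ and vanishes for $s>0$. Yet $w(y,s)=\int_\Omega g(x,0)G(x-y,-s)\,dx>0$ for every $s<0$. Backward Cauchy uniqueness needs the equation on a full strip $\mathbb{R}^n\times I$. Here the hole $\overline\Omega\times[0,T]$ carries unknown data, so nothing propagates downward from $s>T$ through it. Likewise, analytic continuation of $w(\cdot,s)$ needs an open set where it already vanishes, and you never produce one.

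The missing idea is that the data on $\Sigma_2^*\cup\Sigma_3^*$ are the right ones for $H^*$ in the unbounded region \emph{outside} the big cylinder. There $\Sigma_2^*$ is the \emph{top} face of the piece $B_r\times(-\infty,-T^*)$. Set $\widetilde\Sigma_1=B_R\times\{R\}$ and $\widetilde\Sigma_3=\partial B_R\times(-R,R)$. In $\bigl[B_R\times(-R,R)\bigr]\setminus\bigl[B_r\times(-T^*,T^*)\bigr]$ the parabolic boundary for $H^*$ is $\widetilde\Sigma_1\cup\widetilde\Sigma_3\cup\Sigma_2^*\cup\Sigma_3^*$. Since $w\to0$ at infinity, $|w|<\varepsilon$ there for $R$ large. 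The maximum principle then gives $w\equiv0$ in $\mathbb{R}^{n+1}\setminus\bigl[B_r\times(-T^*,T^*)\bigr]$. Only now does analytic continuation apply:
\begin{itemize}
\item for $0\le s\le T$, $w(\cdot,s)$ is real analytic on the connected set $\mathbb{R}^n\setminus\overline\Omega$ and vanishes for $|y|>r$;
\item for $s<0$, it is analytic on all of $\mathbb{R}^n$ and vanishes for $|y|>r$;
\item for $s>T$, it is identically $0$.
\end{itemize}
This gives $w=0$ off $\overline\Omega_T$. After that, the rest of your argument runs as in the paper, and your intermediate relation involving $\tfrac12 g$ and the $\Sigma_2$ term is not needed. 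Your handling of $p=1$ via $g\in L^\infty\subset L^r$ is fine; the paper instead proves $1<p<\infty$ and deduces $p=1$ by density.
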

\begin{proof}
Thanks to the Hann-Banach theorem, we have to show that, if
$\beta\in L^{q}(\Sigma_2\cup\Sigma_3)$ ($q=p/(p-1)$) is such that
\begin{equation}\label{eq:condort}
\int_{\Sigma_2\cup\Sigma_3}\beta\, \Phi \,d\sigma=0,\quad\forall\, \Phiå\in\mathscr{S}\, ,
\end{equation}
then $\beta=0$ a.e.\ on $\Sigma_2\cup\Sigma_3$. 
Conditions \eqref{eq:condort} mean that
\begin{gather*}
\int_{\Sigma^{*}_{2}\cup \Sigma^{*}_{3}}\vf(y,s)d\sigma_{y,s}\int_{\Sigma_2\cup\Sigma_3}\beta(x,t)\, G(x-y,t-s)d\sigma_{x,t}=0
\end{gather*}
for any $\vf\in C^{0}(\Sigma^{*}_{2}\cup \Sigma^{*}_{3})$.  This implies that  the simple layer parabolic potential
\begin{equation*}
u(y,s)=\int_{\Sigma_2\cup\Sigma_3}\beta(x,t)\, G(x-y,t-s)\,d\sigma_{x,t}
\end{equation*}
vanishes on $\Sigma^{*}_{2}\cup \Sigma^{*}_{3}$.
We observe that $u$ satisfies the following exterior BVP 
$$
\begin{cases}
u\in C^{\infty}\left(\mathbb{R}^{n+1} \setminus \left[B_{r}\times(-T^{*},T^{*})\right]\right)\\
H^{*}u=0, & \text{in }\mathbb{R}^{n+1}\setminus  \left[B_{r}\times(-T^{*},T^{*})\right],\\
u=0, & \text{on } \Sigma^{*}_{2}\cup \Sigma^{*}_{3}.
\end{cases}
$$
The function $u$ vanishing at infinity, we have $u=0$ in $\mathbb{R}^{n+1}\setminus  \left[B_{r}\times(-T^{*},T^{*})\right]$.
This can be proved by means of very classical methods (see \cite[Sec. 2]{Picone29}).
We repeat these here for the sake of completeness.

Given $\eps>0$, we may find $R$ large enough so that $|u|<\eps$ on the boundary of $B_{R}\times(-R,R)$. 
Since $u=0$ on $\Sigma^{*}_{2}\cup \Sigma^{*}_{3}$, we have that $|u|<\eps$ on the
parabolic boundary (for the operator $H^{*}$)  of the set
$\left[B_{R}\times(-R,R)\right]\setminus  \left[B_{r}\times(-T^{*},T^{*})\right]$, which  is given by
$$
\widetilde{\Sigma}_{1}\cup \widetilde{\Sigma}_{3}\cup \Sigma^{*}_{2}\cup \Sigma^{*}_{3},
$$
where $\widetilde{\Sigma}_{1}=\{(x,R)\,\,|\,\,x\in B_{R}\}$, $ \widetilde{\Sigma}_{3}=\partial B_{R}\times 
(-R,R)$. By the maximum principle we get $|u|<\eps$ on $\left[B_{R}\times(-R,R)\right]\setminus  \left[B_{r}\times(-T^{*},T^{*})\right]$, from which easily follows $u=0$ in $\mathbb{R}^{n+1}\setminus \left[B_{r}\times(-T^{*},T^{*})\right]$.

Let $0\leq s\leq T$. Since $\mathbb{R}^{n}\setminus \overline{\Omega}$ is connected,
$u(\cdot,s)$ is there analytic and $u(y,s)=0$ for $|y|>r$ , we get $u(y,s)=0$ for any $y\notin \overline{\Omega}$, 
$0\leq s\leq T$. If $s\notin [0,T]$, for similar reasons, we find $u(y,s)=0$ for any $y\in\mathbb{R}^{n}$. In other words we have $u(y,s)=0$ for any $(y,s)\notin \overline{\Omega}_{T}$.

By the properties of the simple layer potential $u$ on $\Sigma_{3}$, we deduce
$u=0$ on $\Sigma_{3}$.  We have also
\begin{equation*}
u(y,T)=\int_{\Sigma_2\cup\Sigma_3}\beta(x,t)\, G(x-y,t-T)\,d\sigma_{x,t}=0
\end{equation*}
and then $u$ is solution of the following Dirichlet boundary value problem 
\begin{equation*}
\begin{cases}
H^{*}u=0\quad\text{in}\quad\Omega_T\\
u=0\quad\text{on}\quad\Sigma_1\cup\Sigma_3\, .
\end{cases}
\end{equation*} 

Since
$$
u(y,s)=\int_{\Sigma_{3}}\beta(x,t)\, G(x-y,t-s)\,d\sigma_{x,t}\, ,\quad
(y,s)\in \Omega_{T},
$$
Theorem \ref{th:slpinA*p}  shows that  $u$ belongs to $\mathscr{A_{*}}^{p}(\Omega_T)$. Then,
by Theorem \ref{teor:unicitfirstbvp}, we get $u= 0$ in $\Omega_{T}$. 
Jump formulas \eqref{eq:formulasalto2} imply
$$
\beta= \left( \frac{\partial u}{\partial \overline{\nu}}\right)^{-} - \left( \frac{\partial u}{\partial \overline{\nu}}\right)^{+} = 0
$$
a.e.\ on $\Sigma_{3}$. Therefore we can write
$$
u(y,s)=\int_{\Sigma_2}\beta(x,0)\, G(x-y,-s)\,d x =0\, , \quad
(y,s)\notin \Sigma_2\, .
$$
By Theorem \ref{th:t->0}, we deduce
$$
\beta(y,0) = \lim_{s\to 0^{-}}\int_{\Sigma_2}\beta(x,0)\, G(x-y,-s)\,d x =0
$$
a.e.\ on $\Sigma_{2}$ and this  completes the proof for $1<p<\infty$.
This implies also the completeness for $p=1$.
\end{proof}

\begin{teor}
Let $1\leq p<\infty$. The system of parabolic polynomials $\{v_{\alpha}\}$ introduced in
Section \ref{sec:polynom} is complete in $L^{p}(\Sigma_{2}\cup\Sigma_{3})$.
\end{teor}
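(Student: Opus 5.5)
We plan to deduce the statement from Theorem \ref{th:completzausiliare}, using Malgrange's approximation theorem as announced in the Introduction. The argument is a density chain. Since by Theorem \ref{th:completzausiliare} the restrictions to $\Sigma_2\cup\Sigma_3$ of the potentials $\Phi\in\mathscr{S}$ are dense in $L^{p}(\Sigma_2\cup\Sigma_3)$, it suffices to show that each such restriction can be approximated in $L^{p}(\Sigma_2\cup\Sigma_3)$ by finite linear combinations of the $v_\alpha$. By Lemma \ref{lem:1}, these combinations are exactly the polynomials $P$ with $HP=0$. It is therefore enough to approximate each $\Phi$ uniformly on $\overline{\Omega}_T$ by such polynomials. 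Uniform approximation implies $L^p$ approximation, because $\Sigma_2\cup\Sigma_3$ has finite measure.

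Fix $\Phi\in\mathscr{S}$ with continuous density $\vf$ on $\Sigma^{*}_{2}\cup\Sigma^{*}_{3}$. The key observation is that $\Phi$ satisfies $H\Phi=0$ in $\mathbb{R}^{n+1}\setminus(\Sigma^{*}_{2}\cup\Sigma^{*}_{3})$. In particular this holds in the open set $U=B_r\times(-T^{*},+\infty)$, which is a neighbourhood of $\overline{\Omega}_T$. Indeed, $G(x-y,t-s)$ is a $C^\infty$ solution of $H$ in $(x,t)$ away from $(y,s)$, so we may differentiate under the integral sign there.

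The main step is to approximate $\Phi$ uniformly on the compact set $K=\overline{B_{r'}}\times[-T',T]$ by polynomial solutions of $HP=0$. Here $r'<r$ and $T<T'<T^{*}$ are chosen so that $\overline{\Omega}_T\subset K\subset U$. We will invoke Malgrange's theorem for operators with constant coefficients. A solution in a neighbourhood of a compact $K$ can be approximated uniformly on $K$ by polynomial (exponential-polynomial) solutions provided $\mathbb{R}^{n+1}\setminus K$ satisfies the appropriate $P$-convexity/connectedness condition. For parabolic operators the relevant requirement is that every characteristic hyperplane $t=c$ meets $\mathbb{R}^{n+1}\setminus K$ in a set none of whose components is relatively compact. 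This holds for a cylinder $K$, since each section $\{t=c\}\setminus K$ is either all of $\mathbb{R}^n$ or the complement of a ball, and in both cases it is connected and unbounded. This verification is the step I expect to be the main obstacle.

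If the cited version of Malgrange only yields approximation by solutions in all of $\mathbb{R}^{n+1}$, rather than by polynomials, a second step is needed. An entire solution $w$ of $Hw=0$ would be approximated on $K$ by the Taylor polynomials of its caloric expansion, written as $\sum c_\alpha v_\alpha$. Alternatively, one could use that exponential solutions $e^{\langle x,\xi\rangle+t a_{hk}\xi_h\xi_k}$, with $\xi$ complex, span a dense set of solutions. By the generating function defining $v_\alpha$, each of these is a locally uniformly convergent series in the $v_\alpha$. Truncating that series gives the desired polynomials, and Lemma \ref{lem:1} (or directly the definition) expresses them as combinations of the $v_\alpha$.

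Chaining the two density statements, for any $f\in L^p(\Sigma_2\cup\Sigma_3)$ and any $\eps>0$ we argue in two steps. First, Theorem \ref{th:completzausiliare} gives $\Phi\in\mathscr{S}$ with $\norma{f-\Phi}_{L^p}<\eps$. Then the previous step gives a combination $P=\sum c_\alpha v_\alpha$ with $\sup_{\overline{\Omega}_T}|\Phi-P|<\eps\,|\Sigma_2\cup\Sigma_3|^{-1/p}$. Together these give $\norma{f-P}_{L^p}<2\eps$, which proves the statement for every $1\le p<\infty$.
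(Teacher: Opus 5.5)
Your density chain is the paper's: Theorem \ref{th:completzausiliare}, then Malgrange, then Lemma \ref{lem:1}, with uniform approximation on $\overline{\Omega}_T$ controlling every $L^p$ norm, so all $1\le p<\infty$ are handled at once. The argument is correct, but your use of Malgrange is more roundabout than needed. The paper simply notes that $\Phi$ solves $H\Phi=0$ in the open cylinder $B_r\times(-T^{*},T^{*})$. This cylinder contains $\overline{\Omega}_T$ and is convex, hence $H$-convex. Malgrange's theorem on such a set (as cited from Tr\`eves) then gives polynomial solutions $\omega_k\to\Phi$ in $C^\infty(B_r\times(-T^{*},T^{*}))$, hence uniformly on $\overline{\Omega}_T$. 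No Runge-type hypothesis on $\mathbb{R}^{n+1}\setminus K$ is required. So you can drop both the step you flagged as the main obstacle, whose hypothesis you only stated heuristically, and the detour through entire solutions. Your own set $U$ is already convex, so you could apply the theorem there directly.

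If you keep the detour, discard your first way of passing from entire solutions to polynomials. An entire solution of $Hw=0$ need not be analytic in $t$, and its expansion $\sum_\alpha D^\alpha_x w(0,0)\,v_\alpha/\alpha!$ need not converge to it. A Tychonoff-type solution, vanishing for $t\le 0$ but not identically zero, has all these coefficients equal to zero. A Runge theorem gives no growth control that would rule this out.

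Your second way is sound: the exponentials $e^{\langle x,\xi\rangle+t\,a_{hk}\xi_h\xi_k}$, $\xi\in\mathbb{C}^n$, are dense, and you then truncate the generating series, which converges uniformly for $(x,t)$ in compact sets. That density statement is exactly what Malgrange's theorem gives on a convex set, which is why the Runge step is superfluous. Since $\xi$ is complex, take real parts of the truncated sums to obtain real combinations of the $v_\alpha$.
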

\begin{proof}

Let $1<p<\infty$ and take $f\in L^{p}(\Sigma_{2}\cup\Sigma_{3})$. By Theorem \ref{th:completzausiliare}, given $\eps>0$, there exists $\Phi\in \mathscr{S}$ such that
$$
\norma{f-\Phi}_{L^{p}(\Sigma_{2}\cup\Sigma_{3})}<\eps\, .
$$

The cylinder $B_{r}\times(-T^{*},T^{*})$ being convex and then $H$-convex (see 
\cite[Cor.\ 3, p.351]{treves}, by a result of Malgrange \cite{malgrange}, we can find a sequence 
$\{\omega_{k}\}$ of polynomial
solutions of the equation $Hu=0$  such that $\omega_{k} \to \Phi$ in $C^{\infty}(B_{r}\times(-T^{*},T^{*}))$
(see \cite[Th. 4.4, p.282, and Remark 4.1, p.284]{treves}). This implies that we can find
a polynomial $\omega$ solution  of the equation $Hu=0$  such that
$$
\norma{\Phi-\omega}_{L^{p}(\Sigma_{2}\cup\Sigma_{3})}<\eps\, 
$$
and then
$$
\norma{f-\omega}_{L^{p}(\Sigma_{2}\cup\Sigma_{3})}<2\, \eps\, .
$$

Since $\omega$ can be written as a finite linear combinations of $v_{\alpha}$
(Lemma\ref{lem:1}), we have the completeness of the system $\{v_{\alpha}\}$
in $L^{p}(\Sigma_{2}\cup\Sigma_{3})$ ($1<p<\infty$).
Completeness when $p=1$ follows easily.
\end{proof}

Defining  $\mathscr{S^{*}}$ the class of potentials
$$
\Psi(y,s)=\int_{\Sigma^{*}_{1}\cup \Sigma^{*}_{3}}\psi(x,t)G(x-y,t-s)\,d\sigma_{x,t}
$$
with $\psi$ varying in $C^{0}(\Sigma^{*}_{1}\cup \Sigma^{*}_{3})$, where $\Sigma^{*}_{1}=\{(x,T^{*})\,\,|\,\,x\in B_{r}\}$, we have also
\begin{teor}
Let $1< p<\infty$. 
The system $\left\{\Psi\big|_{\Sigma_{1}\cup\Sigma_{3}}\, |\, \Psi\in \mathscr{S^{*}}\right\}$
is complete in $L^{p}(\Sigma_{1}\cup\Sigma_{3})$.
\end{teor}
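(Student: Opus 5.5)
We follow the proof of Theorem \ref{th:completzausiliare}, exchanging the roles of $H$ and $H^{*}$ and of $\Sigma_1$ and $\Sigma_2$. By the Hahn--Banach theorem it suffices to show the following: if $\beta\in L^{q}(\Sigma_1\cup\Sigma_3)$, $q=p/(p-1)$, satisfies $\int_{\Sigma_1\cup\Sigma_3}\beta\,\Psi\,d\sigma=0$ for every $\Psi\in\mathscr{S}^{*}$, then $\beta=0$ a.e. Using Fubini, this orthogonality says that the simple layer potential
$$
u(x,t)=\int_{\Sigma_1\cup\Sigma_3}\beta(y,s)\,G(x-y,t-s)\,d\sigma_{y,s}
$$
vanishes on $\Sigma^{*}_{1}\cup\Sigma^{*}_{3}$. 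Here $u$ is a potential of $H$-type in the variables $(x,t)$.

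\textbf{Step 1: $u$ vanishes outside $\overline{\Omega}_T$.} The function $u$ is $C^\infty$ and satisfies $Hu=0$ in $\mathbb{R}^{n+1}\setminus[B_r\times(-T^{*},T^{*})]$, and it tends to $0$ at infinity. For the operator $H$ and the set $[B_R\times(-R,R)]\setminus[B_r\times(-T^{*},T^{*})]$, the parabolic boundary is made of the bottom $\{t=-R\}$, the lateral part $\partial B_R\times(-R,R)$, and the pieces $\Sigma^{*}_{1}\cup\Sigma^{*}_{3}$ of the inner cylinder. On all of these $|u|<\eps$ once $R$ is large, so the maximum principle gives $u=0$ outside $B_r\times(-T^{*},T^{*})$.

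We then extend the vanishing inward:
\begin{itemize}
\item For each $t$, the function $u(\cdot,t)$ is real analytic on $\mathbb{R}^n\setminus\overline{\Omega}$, and this set is connected. Since $u(\cdot,t)=0$ for $|x|>r$, we get $u(x,t)=0$ for $x\notin\overline{\Omega}$.
\item For $t\notin[0,T]$ the same argument works on all of $\mathbb{R}^n$, so $u(\cdot,t)\equiv0$.
\end{itemize}
Hence $u=0$ off $\overline{\Omega}_T$. Continuity of the single layer potential across $\Sigma_3$ then gives $u=0$ on $\Sigma_3$. At the bottom, $G(x-y,0-s)=0$ for $s>0$, so $u(x,0)=0$ directly.

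\textbf{Step 2: $u$ vanishes in $\Omega_T$.} For $(x,t)\in\Omega_T$ we have $t<T$, so the $\Sigma_1$ contribution vanishes and $u$ is a single layer on $\Sigma_3$ with density in $L^q$. By the sufficiency part of the representation theorem for $\mathscr{A}^{p}(\Omega_T)$, with $q$ in place of $p$, $u\in\mathscr{A}^{q}(\Omega_T)$. Its Dirichlet datum $A$ vanishes on $\Sigma_2\cup\Sigma_3$, so the uniqueness theorem for the Dirichlet problem in $\mathscr{A}^{q}(\Omega_T)$ gives $u=0$ in $\Omega_T$. One point needs checking here: the datum $A$ on $\Sigma_3$ is the trace of the single layer, which is zero by Step 1, and on $\Sigma_2$ it is zero as noted above.

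\textbf{Step 3: $\beta=0$.} The conormal derivatives of $u$ vanish from both sides of $\Sigma_3$. The jump formula \eqref{eq:formulasalto_conorm} then gives $\beta=0$ a.e. on $\Sigma_3$. What remains is
$$
u(x,t)=\int_{\Sigma_1}\beta(y,T)\,G(x-y,t-T)\,dy=0 \quad\text{for } t>T .
$$
Letting $t\to T^{+}$ and applying Theorem \ref{th:t->0} (translated in time) gives $\beta(\cdot,T)=0$ a.e. on $\Sigma_1$.

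The main point to verify is Step 2: that the sufficiency part and the Dirichlet uniqueness for $\mathscr{A}^{q}$ hold as stated, with the trace on $\Sigma_3$ understood as an angular limit. These results are established for $1<q<\infty$, which is why the statement is restricted to $1<p<\infty$.
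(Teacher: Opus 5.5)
Your proposal is correct and is exactly the argument the paper intends when it says the proof is ``similar to that of Theorem \ref{th:completzausiliare}'' and omits it. You dualize to an $H$-type single layer $u$ in $(x,t)$, use the correct parabolic boundary for $H$ in the maximum principle, kill $u$ off $\overline{\Omega}_T$ by analyticity in $x$, apply the sufficiency part and Dirichlet uniqueness in $\mathscr{A}^{q}(\Omega_T)$ with the dual exponent, and conclude with \eqref{eq:formulasalto_conorm} on $\Sigma_3$ (the $\Sigma_1$ part of $u$ vanishes identically for $t<T$) and the time-translated Theorem \ref{th:t->0} on $\Sigma_1$.
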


The proof is similar to that of Theorem \ref{th:completzausiliare} and we omit the details. As a consequence we find
\begin{teor}
Let $1\leq p<\infty$. The system of parabolic polynomials $\{w_{\alpha}\}$ introduced in
Section \ref{sec:polynom} is complete in $L^{p}(\Sigma_{1}\cup\Sigma_{3})$.
\end{teor}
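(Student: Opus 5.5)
The plan is to mirror the proof of the completeness of $\{v_{\alpha}\}$ in $L^{p}(\Sigma_{2}\cup\Sigma_{3})$, with the roles of $H$ and $H^{*}$, of $\Sigma_2$ and $\Sigma_1$, and of $\mathscr{S}$ and $\mathscr{S^{*}}$ interchanged. First let $1<p<\infty$ and fix $f\in L^{p}(\Sigma_{1}\cup\Sigma_{3})$ and $\eps>0$. By the preceding theorem on $\mathscr{S^{*}}$, there is $\psi\in C^{0}(\Sigma^{*}_{1}\cup\Sigma^{*}_{3})$ such that the corresponding potential $\Psi$ satisfies $\norma{f-\Psi}_{L^{p}(\Sigma_{1}\cup\Sigma_{3})}<\eps$.

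Next we observe that $\Psi$, as a function of $(y,s)$, solves $H^{*}\Psi=0$ in the convex cylinder $B_{r}\times(-T^{*},T^{*})$. Indeed, $G(x-y,t-s)$ is a fundamental solution of $-H^{*}$ in $(y,s)$, and the densities live on $\Sigma^{*}_1\cup\Sigma^{*}_3$, which lies on the boundary of the cylinder and not inside it. Since $H^{*}$ has constant coefficients and a convex open set is $H^{*}$-convex, Malgrange's theorem (as cited from Treves) gives polynomial solutions $\omega_k$ of $H^{*}\omega_k=0$ with $\omega_k\to\Psi$ in $C^{\infty}(B_{r}\times(-T^{*},T^{*}))$. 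In particular $\omega_k\to\Psi$ uniformly on the compact set $\overline{\Omega}_{T}\subset B_r\times(-T^{*},T^{*})$, which holds because $T<T^{*}$ and $\overline\Omega\subset B_r$. Since $\Sigma_1\cup\Sigma_3$ has finite measure, uniform convergence gives convergence in $L^{p}$, so some polynomial $\omega$ with $H^{*}\omega=0$ satisfies $\norma{\Psi-\omega}_{L^{p}(\Sigma_1\cup\Sigma_3)}<\eps$. The lemma on polynomials $q$ with $H^{*}q=0$ then writes $\omega$ as a finite linear combination of the $w_{\alpha}$. It follows that $\norma{f-\omega}<2\eps$, which is the completeness for $1<p<\infty$.

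For $p=1$, note that $L^{2}(\Sigma_1\cup\Sigma_3)$ is dense in $L^{1}(\Sigma_1\cup\Sigma_3)$ and that $\norma{\cdot}_{L^{1}}\le C\norma{\cdot}_{L^{2}}$ on a set of finite measure. Approximating $f\in L^1$ first by $g\in L^2$, and then $g$ by a combination of the $w_\alpha$ in $L^2$, gives completeness in $L^{1}$. The same argument works for any $p$ starting from one fixed $p_0>1$. This covers the case $p=1$, which the auxiliary theorem on $\mathscr{S^*}$ excluded.

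The main obstacle is the legitimate use of the omitted auxiliary completeness theorem for $\mathscr{S^{*}}$; the analytic work in its proof is the uniqueness chain via Theorems \ref{th:slpinA*p}-type representations for $H$. If I had to justify it, I would repeat the proof of Theorem \ref{th:completzausiliare}, annihilating $\beta\in L^{q}(\Sigma_1\cup\Sigma_3)$. The steps would be: form $u(x,t)=\int\beta(y,s)G(x-y,t-s)\,d\sigma_{y,s}$; show it vanishes outside $\overline\Omega_T$ by the maximum principle for $H$ together with analyticity in $x$; use the representation theorem for $\mathscr{A}^p$ and the Dirichlet uniqueness for $H$ to get $u=0$ in $\Omega_T$; then conclude $\beta=0$ from the jump relations on $\Sigma_3$ and the limit $t\to T^{+}$ on $\Sigma_1$. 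The remaining step, the Malgrange approximation, is routine once $H^{*}$-convexity of the cylinder is noted.
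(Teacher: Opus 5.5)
Your proposal is correct and takes the same route as the paper, which obtains this theorem exactly as it obtains the completeness of $\{v_\alpha\}$. That route is: completeness of $\mathscr{S^{*}}$ on $\Sigma_1\cup\Sigma_3$ for $1<p<\infty$, then Malgrange approximation of $\Psi$ by polynomial solutions of $H^{*}u=0$ on the convex cylinder, then the lemma expressing these as combinations of the $w_\alpha$, and finally an easy density reduction for $p=1$. One small slip: your aside that the reduction from a fixed $p_0>1$ works ``for any $p$'' holds only for $p\le p_0$, since $\norma{\cdot}_{L^{p}}\le C\norma{\cdot}_{L^{p_0}}$ fails for $p>p_0$; you only need it for $p=1$, where it is fine.
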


 \section*{Acknowledgement.}
 A. Cialdea is member of Gruppo Nazionale per l'Analisi Matematica, la Pro\-ba\-bi\-li\-t\`a e le loro Applicazioni (GNAMPA) of the Istituto Nazionale di Alta Matematica (INdAM)
 and acknowledges the support from the project ``Perturbation problems and asymptotics for elliptic differential equations: variational and potential theoretic methods'' funded by the European Union - Next Generation EU and by MUR ``Progetti di Ricerca di Rilevante Interesse Nazionale'' (PRIN) Bando 2022 grant 2022SENJZ3.

\end{document}